\newtheorem{thm}{Theorem}[section]
\newtheorem{thmx}{Theorem} 
\newtheorem{prop}[thm]{Proposition}
\newtheorem{lemma}[thm]{Lemma}
\theoremstyle{definition}
\newtheorem{defn}[thm]{Definition}
\newtheorem{ex}[thm]{Example}
\theoremstyle{remark}
\newtheorem{remark}[thm]{Remark}
\numberwithin{equation}{section}
\def\C{\mathbb{C}}
\def\R{\mathbb{R}}
\def\SO{\mathrm{SO}}
\def\GL{\mathrm{GL}}
\def\Q{\mathcal{Q}}
\def\P{\mathcal{P}}
\def\span{\mathrm{span\,}}
\begin{document}

\title[]{Holomorphic differentials and Laguerre deformation of surfaces}



\author{Emilio Musso}
\address{(E. Musso) Dipartimento di Scienze Matematiche, Politecnico di Torino,
Corso Duca degli Abruzzi 24, I-10129 Torino, Italy}
\email{emilio.musso@polito.it}

\author{Lorenzo Nicolodi}
\address{(L. Nicolodi) Di\-par\-ti\-men\-to di Ma\-te\-ma\-ti\-ca e Informatica,
Uni\-ver\-si\-t\`a degli Studi di Parma, Parco Area delle Scienze 53/A,
Campus Universitario, I-43100 Parma, Italy}
\email{lorenzo.nicolodi@unipr.it}

\thanks{Authors partially supported by MIUR (Italy) under the PRIN project
\textit{Variet\`a reali e complesse: geometria, topologia e analisi armonica};
and by the GNSAGA of INDAM}

\subjclass[2000]{53A35, 53C42}



\keywords{Laguerre surface geometry, Laguerre minimal surfaces, Laguerre Gauss
map, $L$-isothermic surfaces, constant mean curvature surfaces, minimal surfaces,
maximal surfaces, isotropic geometry, Lawson correspondence.}

\begin{abstract}
A Laguerre geometric local characterization is given
of $L$-minimal surfaces and Laguerre deformations ($T$-transforms) of $L$-mi\-ni\-mal isothermic surfaces in terms of the holomorphicity of a quartic and a quadratic differential.
This is used to prove that,
via their $L$-Gauss maps, the $T$-transforms
of $L$-minimal isothermic surfaces
have constant mean curvature $H=r$
in some translate of hyperbolic 3-space $\mathbb H^3(-r^2)\subset \R^4_1$,
de Sitter 3-space $\mathbb S^3_1(r^2)\subset \R^4_1$,
or have
 mean curvature $H=0$ in some translate of a time-oriented
lightcone in $\R^4_1$.
As an application, we show that various instances of the Lawson isometric correspondence
can be viewed as special cases of the $T$-transformation of $L$-isothermic surfaces with holomorphic quartic differential.

\end{abstract}

\maketitle

\section{Introduction}\label{s:intro}

Many features of constant mean curvature (CMC) surfaces in 3-di\-men\-sio\-nal space forms,
viewed as isothermic surfaces in M\"obius space $S^3$, can be interpreted in terms of the transformation
theory of isothermic surfaces.\footnote{We recall that a surface is
isothermic if it admits conformal curvature line coordinates away from umbilic points.}
For instance, the Lawson correspondence between CMC surfaces in space forms
can be viewed as a special case of the classical $T$-transformation of isothermic surfaces.
More specifically, it is shown that CMC surfaces in space forms arise in associated 1-parameter families
as $T$-transforms of minimal surfaces in space forms \cite{Bianchi1905-12,Ca1903,Ca1915,CM,HJlibro}.
In addition to being isothermic,
minimal surfaces in space forms
are Willmore,
that is, are critical points of the Willmore energy
$\int(H^2- K) dA$, where $H$ and $K$ are the mean and
(extrinsic) Gauss curvatures, and $dA$ is the induced area element
of the surface \cite{Blaschke,Br-duality,Thomsen}.
By a classical result of Thomsen \cite{Thomsen}, a Willmore surface
without umbilics
is isothermic if and only if
it is locally M\"obius equivalent to a minimal surface in
some space form.
According to  \cite{Bo-Pe2009, Bo2012}, K. Voss obtained
a uniform M\"obius geometric characterization of
Willmore  surfaces and CMC surfaces in space forms
using the quartic differential $\Q$ introduced by
Bryant \cite{Br-duality} for
Willmore surfaces.
Voss observed that $\Q$,
which indeed may be defined for any conformal immersion of a Riemann surface $M$
into $S^3$,
is holomorphic
if and only if, locally and away from umbilics and isolated points,
the immersion is Willmore or has constant mean curvature in some
space form embedded in $S^3$.

\vskip0.2cm

The purpose of this paper is to discuss the Laguerre geometric counterpart
of the M\"obius
situation described above.
One should consider that,
already in the fundamental work
of Blaschke and Thomsen \cite{Blaschke},
M\"obius and Laguerre surface geometries were
developed in parallel, as subgeometries of Lie sphere geometry.
Another motivation is
that several classical topics in Laguerre geometry,
including Laguerre minimal surfaces and
Laguerre isothermic surfaces,
have recently received much
attention in the theory of integrable systems,
in discrete differential geometry, and in the applications
to geometric computing and architectural geometry
\cite{Bobenko2006, Bobenko2007, Bobenko2010, li-wang-mm, MN-REND-RM,
MN-TAMS, MN-BOLL, MN-IJM, Pott1998, Pott2009, Pott2012}.

\vskip0.2cm

Let us begin by recalling some facts about the Laguerre geometry of surfaces in $\R^3$
to better illustrate our
results.
The group of Laguerre geometry consists of those
transformations
that map oriented planes in $\R^3$ to oriented planes,
oriented spheres (including points) to oriented spheres (including points), and preserve
oriented contact.
As such, the Laguerre group is a subgroup
of the group of Lie sphere transformations
and is isomorphic to the
10-dimensional restricted Poincar\'e group
\cite{Blaschke, Ce}.
Any smooth immersion of an oriented surface into $\R^3$ has a
Legendre (contact) lift to the
space of contact elements
$\Lambda =\R^3\times S^2$.
The Laguerre group acts on the Legendre lifts rather than on the
immersions themselves, since it does not act by point-transformations.
The {\it Laguerre space} is $\Lambda$ as homogeneous space of the Laguerre group.
The principal aim of Laguerre
geometry is to study the properties of an immersion which are invariant
under the action of the Laguerre group on Legendre surfaces.
Locally and up to Laguerre transformation, any
Legendre immersion arises as a Legendre lift.

A smooth immersed surface in $\R^3$ with no parabolic points is
{\it Laguerre minimal} ($L$-{\it minimal}) if it is an extremal of the
Weingarten functional $$\int(H^2/K- 1)dA,$$ where $H$ and $K$ are the mean and
Gauss curvatures of the immersion, and $dA$ is the induced area element
of the surface \cite{Blaschke1, Blaschke, MN-TAMS, Pa1999}. The functional
and so its critical points are preserved by the Laguerre group.
A surface in $\R^3$
is {\it $L$-isothermic} if, away from parabolic and umbilic points, it
admits a conformal curvature line parametrization
with respect to the third fundamental form \cite{Blaschke, MN-BOLL}.
$L$-isothermic surfaces are invariant under the Laguerre group.
See \cite{MN-BOLL, MN-IJM} for a recent study on $L$-isotermic surfaces
and their transformations,
including the analogues of the $T$-transformation
and of the Darboux transformation in M\"obius geometry.

In our discussion we will adopt the cyclographic model
of Laguerre geometry \cite{Blaschke, Ce}. Accordingly,
the space
of oriented spheres  and points in $\R^3$
is naturally identified with Minkowski 4-space $\R^{4}_1$
and the Laguerre space $\Lambda$
is described as the space of isotropic (null) lines in $\R^4_1$.
For
a Legendre immersion $F=(f,n) : M\to \Lambda$,
the {\it Laguerre Gauss} ({\it $L$-Gauss}) {\it map} of $F$,
$\sigma_F : M \to \R^4_1$,
assigns to each $p\in M$ the point of $\R^4_1$
representing
the {\it middle sphere}, that is, the
oriented sphere of $\R^3$
of radius $H/K$ which is in oriented contact with the tangent plane
of $f$ at $f(p)$.
Away from umbilics and parabolic points, $\sigma_F$ is a
spacelike immersion with isotropic mean curvature vector;
moreover, $\sigma_F$ has zero mean curvature vector
in $\R^4_1$ if and only if $F$ is $L$-minimal \cite{Blaschke, MN-TAMS}.

In \cite{MN-TAMS}, 
for a Legendre immersion $F=(f,n) : M \to \Lambda$ which is $L$-minimal,
we introduced a Laguerre invariant holomorphic quartic differential $\Q_F$ on $M$
viewed as a Riemann surface with the conformal structure induced by
 $dn\cdot dn$.
This quartic differential $\Q_F$
may be naturally defined for arbitrary
nondegenerate\footnote{cf. Section \ref{ss:iso} for the right definition.}
Legendre surfaces
together with an invariant quadratic differential $\P_F$.
The quadratic differential $\P_F$ is holomorphic when
$\Q_F$ is holomorphic and vanishes if $F$ is $L$-minimal (cf. Section \ref{s:thm:A}).

\vskip0.2cm

The first main result of this paper provides a
Laguerre geometric characterization of
Legendre surfaces
with holomorphic quartic differential.


\begin{thmx}\label{thm:A}
The quartic differential $\Q_F$ of
a nondegenerate Legendre immersion $F : M \to  \Lambda$ is holomorphic
if and only if the immersion $F$ is $L$-minimal,
in which case $\P_F$ is zero,
or is
locally the $T$-transform of an $L$-minimal isothermic
surface.
\end{thmx}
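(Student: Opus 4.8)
The plan is to carry the argument over to the conformal geometry of the $L$-Gauss map $\sigma_F$. Locally, away from umbilics and parabolic points, the maps occurring as $L$-Gauss maps of nondegenerate Legendre immersions are exactly the spacelike conformal immersions $\sigma:M\to\R^4_1$ whose mean curvature vector is isotropic, so I would fix a conformal coordinate $z$ for the structure induced by $dn\cdot dn$ and build a canonical moving frame for $\sigma_F$ adapted to the two null directions of its normal bundle. In the reduced Maurer--Cartan form there survive a Hopf-type coefficient $b$ and a function $h$ recording the light-cone components of the mean curvature vector of $\sigma_F$; in these terms $\Q_F=q\,dz^{4}$ and $\P_F=p\,dz^{2}$ are given by explicit polynomial expressions in $b$, $h$ and their first derivatives, $F$ is $L$-minimal precisely when $h\equiv 0$, and from the formula one reads off at once that $h\equiv 0$ forces $p\equiv 0$. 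Since $L$-minimality of $F$ makes $\Q_F$ holomorphic by \cite{MN-TAMS}, this already disposes of one half of the sufficiency direction.

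The engine of the proof is a pair of conservation laws derived from the Gauss--Codazzi equations of $\sigma_F$. Differentiating the expression for $q$ and substituting the Codazzi relations yields an identity expressing $q_{\bar z}$ as a linear combination of $p$ and $\bar p$, with coefficients built from the frame data, plus the $L$-minimal Euler--Lagrange quantity $\mathcal{E}(F)$ (so $\mathcal{E}(F)=0$ is the condition for $F$ to be $L$-minimal), and a further relation among the structure functions shows that $\bar\partial\Q_F=0$ entails $\bar\partial\P_F=0$; this is the assertion referred to in Section~\ref{s:thm:A}. Assume now $\Q_F$, hence also $\P_F$, holomorphic, and split into two cases. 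If $\P_F\equiv 0$, then $p\equiv 0$ and the identity collapses to $q_{\bar z}=\mathcal{E}(F)$, so $\mathcal{E}(F)=0$ and $F$ is $L$-minimal, consistently with the claim that $\P_F$ is then zero. If $\P_F\not\equiv 0$, its zeros are isolated and, away from them, the distinguished coordinate $w$ with $\P_F=dw^{2}$ is, by the normalized structure equations (now with $p\equiv 1$ and $q_{\bar w}=0$), a conformal curvature-line coordinate for the third fundamental form; hence $F$ is $L$-isothermic, and $w$ is one of its adapted coordinates.

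It remains, in the case $\P_F\not\equiv 0$, to realize $F$ as a $T$-transform of an $L$-minimal isothermic surface. Since $F$ is now known to be $L$-isothermic, with retraction form $\eta$, the deformed family of connection forms $\omega_\lambda=\omega+\lambda\,\eta$ is flat for every $\lambda$ and integrates, after the compensating gauge, to the family $\{F^{\lambda}\}_{\lambda}$ of $T$-transforms in the sense of \cite{MN-BOLL, MN-IJM}. Writing everything in the coordinate $w$ and using $q_{\bar w}=0$, one tracks the dependence on $\lambda$ of the invariant whose vanishing means $L$-minimality; holomorphicity of $\Q_F$ forces this dependence to be affine in $\lambda$, so it vanishes for a unique value $\lambda_{0}$, and the corresponding member $F_{0}=F^{\lambda_{0}}$ is $L$-minimal while remaining $L$-isothermic. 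Then $F=F_{0}^{-\lambda_{0}}$ exhibits $F$ as a $T$-transform of the $L$-minimal isothermic surface $F_{0}$. Reading this computation in reverse settles the remaining part of the sufficiency direction: the $T$-transformation of an $L$-minimal isothermic surface preserves the conformal class of $dn\cdot dn$ and changes $\Q_F$ and $\P_F$ by explicit $\lambda$-dependent formulas that manifestly keep them holomorphic. I expect the main obstacle to be this reconstruction step --- controlling the $\lambda$-dependence of the Laguerre invariants along the $T$-family sharply enough to locate $\lambda_{0}$, and then matching the integrated frame with the definition of the $T$-transformation in \cite{MN-BOLL, MN-IJM} up to a Laguerre motion and the residual gauge freedom.
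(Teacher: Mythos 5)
Your proposal is correct and follows essentially the same route as the paper: holomorphicity of $\Q_F$ is read off as the parallel mean curvature condition for the $L$-Gauss map in its adapted (middle) frame, this forces $\P_F$ holomorphic and hence either $L$-minimality ($\P_F\equiv 0$) or $L$-isothermicity, and the $L$-minimal member of the spectral family is then located by the affine dependence $\textsc{w}_\lambda=\textsc{w}+\lambda e^{-2u}$ of the $L$-minimality invariant on the spectral parameter. The one loosely stated point is the final step: affineness in $\lambda$ alone does not produce a common zero $\lambda_0$ --- you also need that holomorphicity of $\Q_F$ makes $e^{2u}\textsc{w}$ a \emph{constant} (equivalently, that the Blaschke potential satisfies $\Delta u=ce^{-2u}$), which is precisely the paper's notion of a special $L$-isothermic surface and is what guarantees $\textsc{w}_{\lambda_0}\equiv 0$ for a single $\lambda_0$.
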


The $T$-transforms in the statement of Theorem \ref{thm:A} can be seen as
second order Laguerre deformations
\cite{MN-BOLL, MN-TOH} in the sense of Cartan's
general deformation theory \cite{Ca1, Gr, J}.
Using Theorem \ref{thm:A} we then
characterize $L$-minimal isothermic surfaces and their $T$-transforms
in terms of the differential geometry of their $L$-Gauss maps.
We shall prove the following.

\begin{thmx}\label{thm:B}
Let $F : M \to  \Lambda$ be a nondegenerate Legendre immersion.
Then:
\begin{enumerate}

\item
$F$ is $L$-minimal and $L$-isothermic
if and only if
its $L$-Gauss map $\sigma_F : M \to \R^4_1$
has zero mean curvature in some
spacelike, timelike, or (degenerate) isotropic hyperplane of $\R^4_1$.

\item $F$
has holomorphic $\Q_F$ and non-zero $\P_F$
if and only if
its $L$-Gauss map $\sigma_F : M \to \R^4_1$
has constant mean curvature $H=r$
in some translate of hyperbolic 3-space $\mathbb H^3(-r^2)\subset \R^4_1$,
de Sitter 3-space $\mathbb S^3_1(r^2)\subset \R^4_1$,
or has zero mean curvature in some translate of a time-oriented lightcone
$\mathcal L^3_\pm\subset \R^4_1$.

\end{enumerate}

\noindent In addition, if the $L$-Gauss map of $F$
takes values in a spacelike (respectively, timelike, isotropic) hyperplane, then the
$L$-Gauss maps of the $T$-transforms of $F$ take values
in a translate of a hyperbolic 3-space (respectively, de Sitter 3-space,
time-oriented lightcone).

\end{thmx}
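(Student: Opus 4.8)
The plan is to reduce everything to a computation with a moving frame adapted to the Legendre immersion and its $L$-Gauss map, exploiting Theorem~A together with the structure equations of the Laguerre group $\simeq$ restricted Poincar\'e group acting on $\R^4_1$. First I would set up, away from umbilics and parabolic points, the canonical conformal coordinate $z$ on $M$ coming from $dn\cdot dn$, and write the $L$-Gauss map $\sigma_F$ together with an adapted null/orthonormal frame along it. The key structural facts I will invoke are: (i) $\sigma_F$ is a spacelike immersion whose mean curvature vector is isotropic, and it is identically zero precisely when $F$ is $L$-minimal; (ii) the quartic differential $\Q_F$ and the quadratic differential $\P_F$ are expressed through the components of the second fundamental form and the normal connection of $\sigma_F$ in $\R^4_1$, with $\P_F$ detecting exactly the ``non-minimal'' part. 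I would first record the Gauss--Codazzi--Ricci equations for $\sigma_F$ in $\R^4_1$ in this frame, in which holomorphicity of $\Q_F$ becomes one Codazzi-type identity and holomorphicity plus the value of $\P_F$ controls a second one.

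For part~(1): if $F$ is $L$-minimal and $L$-isothermic, then $\Q_F$ is holomorphic and $\P_F\equiv 0$ by the discussion preceding Theorem~A, and the $L$-minimal condition says $\sigma_F$ has zero mean curvature vector in $\R^4_1$; the $L$-isothermic condition provides, via the conformal curvature-line parametrization with respect to the third fundamental form, a parallel normal null (or spacelike, or timelike) section $e$ along $\sigma_F$ with $d\sigma_F\cdot e\equiv 0$ and $de$ tangent to $\sigma_F$ up to a multiple of $e$, so that $\langle \sigma_F-\sigma_0, e\rangle$ is constant for a suitable constant vector; this confines $\sigma_F$ to an affine hyperplane, whose causal type (spacelike, timelike, degenerate) is exactly the causal type of $e$, and ``zero mean curvature vector in $\R^4_1$'' together with tangency of the hyperplane forces zero mean curvature of $\sigma_F$ as a hypersurface of that hyperplane. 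Conversely, if $\sigma_F$ has zero mean curvature in a hyperplane $V$, then the normal of $V$ is a parallel normal field along $\sigma_F$; this makes $\sigma_F$ $L$-minimal (its $\R^4_1$-mean curvature vector, being normal to $V$ and tangent to $V$ simultaneously, vanishes) and produces the conformal curvature-line net with respect to $dn\cdot dn$ that characterizes $L$-isothermic, so $\P_F\equiv0$.

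For part~(2): by Theorem~A, ``$\Q_F$ holomorphic and $\P_F\neq 0$'' means $F$ is locally a $T$-transform of an $L$-minimal isothermic surface and $\P_F$ is a nonzero holomorphic quadratic differential. I would now compute, from the structure equations, a distinguished normal vector field $\xi$ along $\sigma_F$: holomorphicity of $\P_F$ forces $\xi$ to be parallel in the normal bundle and of \emph{constant length} $-r^2$, $r^2$, or $0$, the three cases corresponding to the causal character allowed by the sign of $\langle\xi,\xi\rangle$, with the sign and magnitude read off from the (constant) norm of $\P_F$'s ``coefficient''. Then $\sigma_F+\tfrac{1}{c}\xi$ (suitably normalized) is a constant vector $\sigma_0$, i.e.\ $|\sigma_F-\sigma_0|^2$ is constant, placing $\sigma_F$ in a translate of $\mathbb H^3(-r^2)$, $\mathbb S^3_1(r^2)$, or a time-oriented lightcone $\mathcal L^3_\pm$; and $\xi$ being parallel and of constant length exactly says $\sigma_F$ has parallel mean curvature vector as a surface of that space form, which in codimension one means constant mean curvature $H=r$ (respectively $H=0$ in the lightcone, where the relevant normal is null). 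The converse is the same computation read backwards: a CMC $H=r$ surface in such a space form yields, through the position vector and the unit normal, a parallel normal field along $\sigma_F$ of the prescribed constant norm, whence $\P_F$ is holomorphic and has the nonzero constant length forcing $\P_F\neq0$, while $\Q_F$ holomorphic comes from the Codazzi equation of the CMC surface.

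Finally, the added ``in addition'' clause follows by tracking the $T$-transform through this dictionary: if $\sigma_F$ lies in a spacelike (resp.\ timelike, isotropic) hyperplane, then as shown in part~(1) $F$ is $L$-minimal isothermic with $\P_F\equiv0$, and the $T$-transform $F^t$ has $\Q_{F^t}$ still holomorphic but now $\P_{F^t}\neq0$ of the corresponding causal type --- the spectral parameter $t$ deforms the parallel normal field $e$ (which was tangent to the fixed hyperplane direction) into a parallel normal field $\xi_t$ whose constant norm is a nonzero multiple of $-t^2$, $t^2$, or $0$ according to whether $e$ was spacelike, timelike, or null --- so by part~(2) the $L$-Gauss map of $F^t$ lands in a translate of $\mathbb H^3$, $\mathbb S^3_1$, or $\mathcal L^3_\pm$ respectively.

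The main obstacle I anticipate is the third step of part~(2): extracting the \emph{parallel, constant-length} normal field $\xi$ purely from the holomorphicity of $\P_F$ and identifying its norm with the (constant) coefficient of $\P_F$. This requires carefully disentangling, in the structure equations of $\sigma_F$ in $\R^4_1$, the contribution of the isotropic mean curvature vector from that of the holomorphic quadratic differential, and checking that the Ricci equation for the normal bundle is automatically satisfied so that no integrability obstruction appears; the lightcone (degenerate) case needs separate handling because the normal ``direction'' is null and the usual normalization of the mean curvature vector degenerates.
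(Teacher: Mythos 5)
Your plan follows essentially the same route as the paper: part (1) is proved by producing, from the middle-frame structure equations under $p_1+p_3=0$ and $p_2=0$, a constant vector normal to $\sigma_F$ (hence an affine hyperplane containing $\sigma_F$), and part (2) by showing that holomorphicity of $\Q_F$ with $\P_F\neq 0$ is equivalent to the existence of a constant point $O$ with $\sigma_F-O$ normal to the surface and of constant causal norm, i.e.\ containment in a pseudo-hypersphere; the ``in addition'' clause is then read off from how the spectral parameter shifts the invariant $\textsc w$.

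One step does not follow as you state it. In part (2) you derive ``constant mean curvature $H=r$'' from the assertion that a parallel normal field of constant length in codimension one gives CMC. That only yields \emph{some} constant value of $H$: a surface in $\mathbb H^3(-r^2)$ can be CMC with any $H$. The specific value $H=r$ (equivalently $H^2+\kappa=0$, which is exactly what makes the Lawson/Umehara--Yamada interpretation in Section 5 work) comes from the additional fact that the mean curvature vector of an $L$-Gauss map is isotropic in $\R^4_1$, $\langle\mathbf H,\mathbf H\rangle=0$: decomposing $\mathbf H$ into its components along the space-form unit normal and along the position vector $\sigma_F-O$, the isotropy condition pins down $H^2=-\kappa$. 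You list this isotropy among your structural facts but never deploy it at the decisive moment, and the same issue reappears in your converse, where ``of the prescribed constant norm'' is doing unacknowledged work. A secondary, terminological point: a hyperplane is called spacelike (here and conventionally) when its \emph{normal} is timelike, so the causal type of the hyperplane is not ``exactly the causal type of $e$'' as you write; this does not affect the logic, but it matters when matching the three cases of part (1) to the three cases of the ``in addition'' clause.
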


As an application of the two theorems above, we show that
the Lawson correspondence \cite{Lawson} between
certain isometric CMC surfaces in different hyperbolic 3-spaces
and, in particular,
the Umehara--Yamada isometric perturbation \cite{UY-Crelle}
of minimal surfaces of $\R^3$ into CMC surfaces in hyperbolic 3-space,
can be viewed as a special case of the $T$-transformation of $L$-isothermic surfaces
with holomorphic quartic differential.
(For a M\"obius geometric interpretation of the Umehara--Yamada
perturbation see \cite{HMN, MN-HOUSTON}).
This interpretation also applies
to the generalizations of Lawson's correspondence in the Lorentzian
\cite{Pa1990} and the (degenerate) isotropic situations,
%
%
namely to the perturbation of maximal surfaces in Minkowski
3-space into CMC spacelike surfaces in de Sitter 3-space
\cite{AiAk, AGM, Kob, Lee2005}, and that of zero mean curvature
spacelike surfaces in a (degenerate) isotropic 3-space into zero mean curvature spacelike
surfaces in a time-oriented lightcone of $\R^4_1$.

\vskip0.2cm
The paper is organized as follows. Section \ref{s:pre} recalls some background material about Laguerre
geometry, develops the method of moving frames for Legendre surfaces,
and briefly discusses $L$-isothermic surfaces and their $T$-transforms.
Section \ref{s:thm:A} proves Theorem \ref{thm:A}.
It constructs a quartic differential $\Q_F$ and a quadratic differential $\P_F$ on $M$
from a nondegenerate Legendre immersion $F : M \to  \Lambda$,
and proves that $\Q_F$ is holomorphic if and only if $F$ is $L$-minimal or
is $L$-isothermic of a special type (cf. Proposition \ref{thm:hQ-iff-W-or-special}
and Section \ref{ss:s-L-iso}).
Section \ref{s:thm:B} proves Theorem \ref{thm:B}.
In particular, $L$-minimal isothermic surfaces
and their $T$-transforms
are characterized in terms of the differential geometry of their $L$-Gauss maps
(cf. Propositions \ref{prop:L-min-iso} and \ref{p:main}).
Section \ref{s:lawson} discusses the Laguerre deformation of surfaces
with holomorphic $\Q$ in relation with various instances of the Lawson
isometric correspondence for CMC spacelike surfaces.

\section{Preliminaries and definitions}\label{s:pre}

\subsection{The Laguerre space}
Let $\R^{4}_1$ denote Minkowski 4-space with its
structure of affine vector space and a translation invariant Lorentzian scalar
product $\langle~,~\rangle$ which
takes the form
 $$
\langle v,w\rangle=-(v^1w^4 + v^4w^1) + v^2w^2 +v^3w^3
=g_{ij}v^iw^j, 
$$
with respect to the standard basis $\epsilon_1,\dots,\epsilon_4$.
We use
the
summation convention over repeated indices.
A vector $v\in \R^4_1$ is \textit{spacelike} if $\langle v,v\rangle>0$,
\textit{timelike} if $\langle v,v\rangle<0$, \textit{lightlike} (or \textit{null} or \textit{isotropic})
if $\langle v,v\rangle=0$ and $v\neq 0$.
We fix a space orientation
by requiring that the standard
basis is positive, and fix a time-orientation by
saying that a timelike or lightlike vector $v$ is positive if
$\langle v,\epsilon_1+\epsilon_4\rangle<0$.
The corresponding positive lightcone is given by
\begin{equation}\label{time-cone}
 \mathcal L^3_+ =\left\{v\in \R^{4}_1 : \langle v,v\rangle= 0,\
\langle v,\epsilon_1+\epsilon_4\rangle<0\right\}.
\end{equation}

The {\it Laguerre group} $L$ is the group of isometries of $\R^4_1$ which preserve the given
space and time orientations. It is isomorphic to the semidirect product
$\R^4 \rtimes G$, where $G$ consists  of elements
$a=(a^i_j)\in \GL(4,\R)$ such that
\begin{equation}\label{def-L}
 \det a =1,\quad
g_{hk}a^h_ia^k_j=g_{ij},\quad
 a\epsilon_1, a\epsilon_4 \in \mathcal L^3_+.
\end{equation}
$L$ is isomorphic to the (restricted)
{\it Poincar\'e group}  $\R^4 \rtimes \SO_o(3,1)$.

By a {\it Laguerre frame} $(x;a_1,\dots,a_4)$ is meant
a position vector $x$ of $\R^4_1$
and an oriented basis $(a_1,\dots,a_4)$ of $\R^4_1$, such that
\begin{equation}\label{lframe1}
 \langle a_i,a_j\rangle = g_{ij},
 \quad
  a_1, a_4 \in \mathcal L^3_+.
\end{equation}
$L$ acts simply transitively
on Laguerre frames and the
manifold of all such frames may be identified, up to the choice of a
reference frame,
with $L$. For any $(x,a)\in L$, we regard $x$ and $a_i=a\epsilon_i$ as
$\R^4$-valued functions.
There are unique 1-forms $\omega^i_0$ and
$\omega^i_j$ such that
\begin{eqnarray}
 dx &=&\omega^i_0a_i,\label{lframe2}\\
  da_i&=&\omega^j_ia_j.\label{lframe2a}
   \end{eqnarray}

Exterior differentiation of \eqref{lframe1}, \eqref{lframe2} and \eqref{lframe2a} yields
the structure equations:
\begin{eqnarray}
 0 &=&\omega^k_i g_{kj}+\omega^k_j g_{ki}, \label{str-eq1}\\
  d\omega^i_0 &=& - \omega^i_j \wedge \omega^j_0,\label{str-eq2}\\
d\omega^i_j &=& -  \omega^i_k \wedge \omega^k_j.\label{str-eq3}
\end{eqnarray}

\vskip0.2cm
\noindent Any time-oriented isotropic line in
$\R^4_1$ may be realized as $x + tv$
where $x\in\R^4_1$,  $v\in \mathcal L^3_+$ and $t$ ranges over $\R$,
and will be denoted
by $[x,v]$. The set of all isotropic lines
$$
\Lambda =\left\{ [x,v] : x\in \R^4_1, v\in \mathcal L^3_+\right\}
$$
is called the {\it Laguerre space}. The group $L$ acts transitively
on $\Lambda$ by
$$
L\times \Lambda \to \Lambda,
\quad \left((x,a),[y,v]\right)\mapsto (x,a)\cdot [y,v] =[x + ay, av].
$$
If we choose $[0,\epsilon_1]\in \Lambda$ as an origin, and let $L_0$
be the isotropy subgroup of $L$ at $[0,\epsilon_1]$, the smooth map
$$
\pi_L : L \to\Lambda,\quad
 \pi_L(A) = A\cdot [0,\epsilon_1]=[x,a_1]
$$
is the projection map of a principal $L_0$-bundle over
$\Lambda\cong L/L_0$.
The elements of $L_0$ are matrices of the form
$$
X(d;b;x)=\left(\begin{pmatrix}
d_1\\
0\\
0\\
0\\
\end{pmatrix},
\begin{pmatrix}
 d_2&\tilde x^1&\tilde x^2&{\frac{d_2}2}{^txx} \\
 0&b^1_1&b^1_2&x^1 \\
 0&b^2_1&b^2_2&x^2 \\
 0&0&0&{\frac{1}{d_2}}
\end{pmatrix}\right),
$$
where $b=(b_j^i)\in \SO(2)$, $d=(d_1,d_2)$, $d_2 > 0$,
$x={^t(}x^1,x^2)\in\R^2$, $(\tilde x^1,\tilde x^2)=d_2{^txb}$.

\subsection{The cyclographic model of Laguerre geometry
(\cite{Blaschke, Ce, Pott1998})}\label{ss:cyclo}

The fundamental objects of Laguerre geometry in Euclidean space are
{\it oriented planes} and $L$-{\it spheres} (or {\it cycles}).
By an $L$-sphere is
meant an oriented sphere or a point (a sphere of radius zero).
The orientation is determined by specifying a unit normal vector
for planes and a signed radius in the case of a sphere.

In the {\it cyclographic model} of Laguerre geometry,
an $L$-sphere $\sigma(p,r)$, with center
$p={^t(}p^1,p^2,p^3)$ and signed radius $r\in \R$,
%
%
is represented as the point of $\R^4_1$ given by
$$
x(p,r)=
{^t\Big(}{\frac{r+p^1}{\sqrt2}},p^2,p^3,{\frac{r-p^1}{\sqrt2}}\Big).
$$
An oriented plane $\pi(n,p)$ through $p$
orthogonal to $n=(n^1,n^2,n^3)\in S^2\subset \R^3$
is identified with the
{\it isotropic} hyperplane through $x(p)\in\R^4_1$
with isotropic normal vector
$$
v(n,p)=  {^t\Big(}{\frac{1+n^1}2},{\frac{n^2}{\sqrt2}},
{\frac{n^3}{\sqrt2}},{\frac{1-n^1}2}\Big).
$$

The oriented contact of $L$-spheres and oriented planes corresponds
in $\R^4_1$ to the incidence of
points and iso\-tro\-pic hyperplanes.
Two oriented $L$-spheres represented by points $x$ and $y$ in $\R^4_1$ are in
oriented contact if and only if
$$
\langle x-y,x-y\rangle=0.
$$
In this case, $x-y$ is the normal vector of the isotropic hyperplane in
$\R^4_1$ corresponding to the common tangent plane of the $L$-spheres
represented by $x$ and $y$.

This implies
that to any time-oriented isotropic line $\ell$ corresponds
a pencil of oriented spheres
which are in oriented contact at $p(\ell)\in \R^3$ with a fixed plane
$\pi$, where $p(\ell)$ represents the
unique $x$ on $\ell$ such that $\langle x,\epsilon_1+\epsilon_4\rangle=0$.
In other words,
$\Lambda$ can be identified
with the space $\R^3\times S^2$ of oriented contact elements
of $\R^3$ by the correspondence
\begin{equation}\label{1.7}
 (p,n)\in \R^3\times S^2\mapsto[x(p),v(n,p)]\in \Lambda.
  \end{equation}
By the structure equations of $L$, we see that the 1-form
$-\langle dx,a_1\rangle$ defines an
$L$-invariant contact distribution on the Laguerre space $\Lambda$.
By \eqref{1.7}, such a contact structure
coincides with the
standard contact structure on $\R^3\times S^2$.
In this way $L$ can be seen as a 10-dimensional group of contact
transformations
acting on $\R^3\times S^2$.

The points on a spacelike line $\ell$ in $\R^4_1$ represent
$L$-spheres in Euclidean space which envelope a circular cone.
For two points $x$ and $y$ on $\ell$, we have
\[
 \langle x-y,x-y\rangle = d^2,
  \]
where $d$ is the {\it tangential distance}, that is,
the Euclidean distance between the points where any common
oriented tangent plane touches the $L$-spheres corresponding to $x$ and $y$.
The tangential distance
is zero precisely when the two $L$-spheres are in oriented contact.

The points on a timelike line $\ell$ in $\R^4_1$ represent
$L$-spheres without any common oriented tangent plane.
For two points $x$ and $y$ on $\ell$,
$$
 \langle x-y,x-y\rangle = -d^2,
  $$
where $d$ is the {\it parallel distance},
that is, the
distance between the equally oriented parallel tangent planes
to the two corresponding $L$-spheres.

The {\it spherical system of $L$-spheres}
determined by a point $z\in \R^4_1$ and a constant $c\in \R$
is the set of all $L$-spheres represented by points $x\in \R^4_1$
satisfying the equation of the {\it pseudo-hypersphere}
\begin{equation}\label{J-Ueberkugel}
  \langle x- z,x- z\rangle = c.
   \end{equation}
The system consists of all $L$-spheres which have constant
tangential or parallel distance $\sqrt{|c|}$ from the fixed $L$-sphere
represented by ${z}$.
For $c=0$, the system consists of all $L$-spheres which are in oriented
contact with the fixed $L$-sphere represented by ${z}$.
The spherical system (pseudo-hypersphere) defined by \eqref{J-Ueberkugel} is {\it isotropic}
if $c=0$, {\it timelike} if $c>0$, and {\it spacelike} if $c<0$.
%
%

The {\it planar system of $L$-spheres}
determined by a point $z\in \R^4_1$ and a vector $v\in \R^4_1$
is the set of all $L$-spheres represented by points $x\in \R^4_1$
satisfying the equation of the hyperplane
\begin{equation}\label{Ueberebene}
  \langle x- z,v\rangle = 0.
   \end{equation}
The planar system (hyperplane)
is called {\it isotropic} (respectively, {\it timelike},
{\it spacelike})
if the vector $v$ is isotropic (respectively, spacelike, timelike).

\subsection{Laguerre surface geometry: the middle frame}\label{ss:L-surf-ge}

An immersed surface $f : M \to\R^3$, oriented by a
unit normal field $n : M\to S^2$, induces
a lift  $F = (f,n)$ to $\Lambda$ which is a Legendre (contact) immersion with
respect to the canonical contact structure of $\Lambda$.
More generally, a {\it Legendre surface} is an immersed surface
$F =(f,n) : M\to\Lambda$
such that $df\cdot n = 0$. The additional condition
$dn\cdot dn>0$ will be assumed throughout. Moreover,
we say that $F$ is {\it nondegenerate} if  the quadratic forms
$df\cdot dn$ and $dn\cdot dn$ are everywhere linearly independent on $M$.
We recall that two Legendrian immersions $(M,F)$ and $(M',F')$ are
said to
be  $L$-{\it equivalent} if there exists a diffeomorphism $\phi : M\to M'$
and $A\in L$
such that  $F'\circ \phi=AF$.
Locally and up to $L$-equivalence, any Legendre surface arises as a
Legendre lift.
In particular, two immersed surfaces in $\R^3$ are $L$-equivalent if their
Legendre  lifts are $L$-equivalent.

\begin{defn}
A (local) {\it Laguerre frame field} along
a Legendre immersion $F : M\to\Lambda$ is a smooth map
$A=(a_0,a) : U \to L$
defined on an open subset $U\subset M$, such that
$\pi_L(a_0,a) = [a_0,a_1]=  F$.
\end{defn}

For any Laguerre frame field $A :  U\to L$ we let
\[
\alpha =
\left((\alpha^i_0),(\alpha^i_j)\right) = \left((A^{\ast}\omega^i_0),
(A^{\ast}\omega^i_j)\right).
\]
We then have
$$
 \alpha^4_0=0,\quad \alpha^2_1\wedge{\alpha^3_1}\neq 0.
  $$
Any other Laguerre frame field $\hat A$ on $U$ is given by
$\hat A = AX(d;b;x)$,
where $X=X(d;b;x) :  U\to L_0$ is a smooth map,
and $\hat\alpha$ and $\alpha$ are related by
$
\hat\alpha=X^{-1}\alpha X+X^{-1}dX.
$

A Laguerre frame field $A$ along $F$ is called a {\it middle frame field} if
there exist smooth functions $p_1$, $p_2$, $p_3$, $q_1$, $q_2 : U \to \R$
such that $\alpha = ((\alpha^i_0),(\alpha^i_j))$ takes the form
\begin{equation}\label{conn-form}
\left(\!\left(\!\begin{smallmatrix}
0\\
\alpha^2_0\\
\alpha^3_0\\
0\\
\end{smallmatrix}\!\right)\!,\!
\left(\!\begin{smallmatrix}
 2q_2\alpha^2_0-2q_1\alpha^3_0&p_1\alpha^2_0+p_2\alpha^3_0&
p_2\alpha^2_0+p_3\alpha^3_0&0 \\
 \alpha^2_0&0&-q_1\alpha^2_0-q_2\alpha^3_0&p_1\alpha^2_0+p_2\alpha^3_0 \\
 -\alpha^3_0&q_1\alpha^2_0+q_2\alpha^3_0&0&p_2\alpha^2_0+p_3\alpha^3_0\\
 0&\alpha^2_0&-\alpha^3_0&-2q_2\alpha^2_0+2q_1\alpha^3_0
\end{smallmatrix}\!\right)\!\right)\!,
\end{equation}
with $\alpha^2_0\wedge \alpha^3_0 > 0$; $(\alpha^2_0,\alpha^3_0)$ is
called the \textit{middle coframe} of $F$. The existence of a middle frame
field along $F$ was proved in \cite{MN-TAMS},
under the nondegeneracy assumption.
The smooth functions $q_1$,
$q_2$, $p_1$, $p_2$, $p_3$ form a complete system of Laguerre
invariants for $F$ and satisfy the following structure equations:
\begin{eqnarray}
d\alpha^2_0=q_1\alpha^2_0\wedge\alpha^3_0,&{} &
d\alpha^3_0=q_2\alpha^2_0\wedge\alpha^3_0,\label{se0} \\
dq_1\wedge\alpha^2_0+dq_2\wedge\alpha^3_0&=&
(p_3-p_1-{q_1}^2-{q_2}^2)\alpha^2_0\wedge\alpha^3_0,\label{se1}\\
dq_1\wedge\alpha^3_0-dq_2\wedge\alpha^2_0&=& - p_2\alpha^2_0\wedge\alpha^3_0,\label{se2}\\
dp_1\wedge\alpha^2_0+dp_2\wedge\alpha^3_0&
= &(-3q_1p_1-4q_2p_2+q_1p_3)\alpha^2_0\wedge\alpha^3_0,\label{se3}\\
dp_2\wedge\alpha^2_0+dp_3\wedge\alpha^3_0&=&
(-3q_2p_3-4q_1p_2+q_2p_1)\alpha^2_0\wedge\alpha^3_0.\label{se4}
\end{eqnarray}

The vector $a_0$ is independent of the middle frame field $A = (a_0, a)$ and is therefore globally
defined on $M$.

\begin{defn}
The {\it Laguerre Gauss map} ({\it $L$-Gauss map}) of the nondegenerate Legendre immersion
$F: M \to \Lambda$ is the smooth map
\[
 \sigma_F : M \to \R^4_1
  \]
defined locally by $\sigma_F := a_0 : U \to \R^4_1$, where
$A = (a_0, a): U \to L$ is a middle frame field along
$F$.
\end{defn}

\begin{remark}
If ${A}=(a_0, a)$ is a middle frame field along $F$ and $U$ is connected, the only
other middle frame field
on $U$ is given by
\[\tilde A=(a_0,a_1,-a_2,-a_3,a_4).\]
Under this frame
change, the invariants $p_1, p_2, p_3, q_1, q_2$ transform by
$$
\tilde q_1= -q_1,\quad   \tilde q_2= -q_2,\quad
\tilde p_1= p_1, \quad \tilde p_2= p_2,  \quad\tilde p_3= p_3.
$$
Thus, there are well defined global functions $\text{\sc j}$,
$\text{\sc w}: M \to \R$ such that
locally
$$
 \text{\sc j} =\frac{1}{2}(p_1-p_3), \quad
  \text{\sc w}= \frac{1}{2}(p_1+p_3).
   $$

We recall that a nondegenerate Legendrian
immersion $F : M \to  \Lambda$ is {\it $L$-minimal} if and only
if $p_1+p_3 = 0$ on $M$ (cf. \cite{MN-TAMS}).

\end{remark}

\subsection{$L$-isothermic surfaces}\label{ss:iso}

We recall that a nondegenerate Legendrian immersion $F : M \to \Lambda$
is {\it $L$-isothermic}
if there exist local coordinates which simultaneously diagonalize
the definite pair of quadratic forms
$\langle da_0,da_0\rangle =(\alpha^2_0)^2+(\alpha^3_0)^2$ and
$\langle da_0,da_1\rangle =(\alpha^2_0)^2-(\alpha^3_0)^2$
and which are
isothermal with respect to $\langle da_0,da_0\rangle$.
If $f : M \to \R^3$ is an immersed surface
without umbilic and parabolic points, oriented by the unit normal field $n$,
the $L$-isothermic condition amounts to the existence of
isothermal (conformal)
curvature line coordinates for the pair of quadratic forms $III=dn \cdot dn$
and $II=df\cdot dn$.
In \cite{MN-BOLL}, it is shown that $F$ is $L$-isothermic if and only if
$p_2$ vanishes identically on $M$. In this case, there exist isothermal
curvature line coordinates $z= x+iy$
such that the middle coframe
$(\alpha^2_0, \alpha^3_0)$ takes the form
\[
 \alpha^2_0= e^u dx,\quad \alpha^3_0 = e^u dy,
  \]
for a smooth function $u$ on $M$.
The function $\Phi =e^u$ is called the {\it Blaschke potential} of $F$.

Accordingly, from \eqref{se0}, \eqref{se1} and \eqref{se2} it follows that
\begin{eqnarray}
  q_1= -e^{-u}u_y, & {} &q_2= e^{-u}u_x, \label{iso1} \\
   p_1 -p_3 &=& - e^{-2u} \Delta u \label{iso2}.
    \end{eqnarray}
Moreover, using \eqref{se3} and \eqref{se4} yields
\begin{equation}\label{dW}
 \begin{split}
  d\left(e^{2u}(p_1 +p_3)\right) &= -e^{2u}\left\{\left(e^{-2u}\Delta u\right)_x
   +4u_x(e^{-2u}\Delta u)\right\} dx \\
   &\quad +
    e^{2u}\left\{(e^{-2u}\Delta u)_y
     +4u_y(e^{-2u}\Delta u)\right\}dy.
  \end{split}
     \end{equation}
The integrability condition of \eqref{dW} is the so-called {\it Blaschke equation},
\begin{equation}\label{blaschke-eq}
 \Delta \left(e^{-u}(e^u)_{xy}\right) =0,
  \end{equation}
which can be viewed as the completely integrable (soliton)
equation of $L$-isothermic surfaces \cite{MN-BUD, MN-AMB}.

Conversely, let $U$ be a simply connected domain in $\C$, and let $\Phi = e^u$ be
a solution to the Blaschke equation \eqref{blaschke-eq}. It follows
that the right hand side of \eqref{dW} is a closed 1-form, say $\eta_\Phi$.
Thus, $\eta_\Phi = dK$, for some function $K$ determined up to an
additive constant.
If we let
\begin{equation}\label{J-L}
    \text{\sc w} = Ke^{-2u}, \quad
   \text{\sc j} = -\frac{1}{2}e^{-2u}\Delta u,
   \end{equation}
the 1-form defined by
\[
\alpha=
\left(\!\left(\!\begin{smallmatrix}
0\\
e^udx\\
e^udy\\
0\\
\end{smallmatrix}\!\right)\!,\!
\left(\!\begin{smallmatrix}
 2du&(\text{\sc w}+\text{\sc j})e^udx&(\text{\sc w}-\text{\sc j})e^udy&0 \\
 e^udx& 0 &u_ydx - u_xdy &(\text{\sc w}+\text{\sc j})e^udx \\
 -e^udy&-u_ydx +u_xdy&0&(\text{\sc w}-\text{\sc j})e^udy\\
 0&e^udx&-e^udy&-2du
\end{smallmatrix}\!\right)\!\right)\!
\]
satisfies the Maurer--Cartan integrability condition
$$d\alpha +\alpha\wedge\alpha = 0$$
and then integrates to a
map $A=(x,a) : U \to L$, such that $dA = A\alpha$.
The map $F : U \to \Lambda$ defined by
$$F= [x,a_1]$$
is a smooth Legendre immersion and $A$
is a middle frame field along $F$. Thus, $F$ is an $L$-isothermic
immersion (unique up to
Laguerre equivalence) and $\Phi$ is its Blaschke potential.

If, for any $m\in \R$, we let
\begin{equation}\label{J-L-m}
    \text{\sc w}_m = \text{\sc w}+me^{-2u}, \quad
   \text{\sc j}_m = \text{\sc j}  = -\frac{1}{2}e^{-2u}\Delta u,
   \end{equation}
then the 1-form defined by
\[
\alpha^{(m)}=
\left(\!\left(\!\begin{smallmatrix}
0\\
e^udx\\
e^udy\\
0\\
\end{smallmatrix}\!\right)\!,\!
\left(\!\begin{smallmatrix}
 2du&(\text{\sc w}_m+\text{\sc j}_m)e^udx&(\text{\sc w}_m-\text{\sc j}_m)e^udy&0 \\
 e^udx& 0 &u_ydx - u_xdy &(\text{\sc w}_m+\text{\sc j}_m)e^udx \\
 -e^udy&-u_ydx +u_xdy&0&(\text{\sc w}_m-\text{\sc j}_m)e^udy\\
 0&e^udx&-e^udy&-2du
\end{smallmatrix}\!\right)\!\right)\!
\]
satisfies the Maurer--Cartan integrability condition
$$d\alpha^{(m)} +\alpha^{(m)}\wedge\alpha^{(m)} = 0,$$
so that there exists a
smooth map $A^{(m)}=(x^{(m)},a^{(m)}) : U \to L$, such that $dA^{(m)} = A^{(m)}\alpha^{(m)}$.
The map $F_m : U \to \Lambda$, given by $F_m= [x^{(m)},a_1^{(m)}]$,
is a smooth Legendre immersion and $A^{(m)}$
is a middle frame field along $F_m$. Thus, $F_m$ is an $L$-isothermic
immersion (unique up to
Laguerre equivalence) with the same Blaschke potential $\Phi$ as $F =F_0$.
Then there exist a 1-parameter family of non-equivalent $L$-isothermic immersions $F_m$,
all of which have the same Blaschke potential $\Phi$.

Actually, any other nondegenerate $L$-isothermic immersion having $\Phi$
as Blaschke potential is Laguerre equivalent to $F_m$, for some $m\in \R$.

\begin{defn}
Two $L$-isothermic immersions
$F, \tilde F$ which are not Laguerre equivalent are said to
be $T$-{\it transforms} ({\it spectral deformations}) of each other
if they have the same Blaschke potential.
\end{defn}

\begin{remark}
The spectral family $F_m$ constructed above describes all $T$-fransforms
of $F= F_0$. In fact, any nondegenerate $T$-transform of $F$ is Laguerre
equivalent to $F_m$, for some $m\in \R$.
Such a 1-parameter family of $L$-isothermic surfaces amounts to the family of
second order Laguerre deformations of $F$ in the sense of Cartan
(\cite{MN-TAMS, MN-BOLL}).
\end{remark}

\subsection{The geometry of the $L$-Gauss map}\label{ss:L-Gauss-map}

Given the identification of the space of $L$-spheres with
Minkowski 4-space $\R^4_1$,
an immersion $\sigma : M \to \R^4_1$ of a surface $M$ into $\R^4_1$
is called a {\it sphere congruence}.
A Legendre
immersion $F = (f,n)$ is said to envelope the sphere congruence $\sigma$
if, for each $p\in M$, the $L$-sphere represented by $\sigma(p)$
and the oriented plane $\pi(n(p), f(p))$ are in oriented contact
at $f(p)$. If $\sigma$ is a spacelike immersion,
there exist two enveloping surfaces \cite{Blaschke}.

For a nondegenerate Legendre immersion $F : M \to \Lambda$,
the $L$-Gauss map
\[
 \sigma_F : M \to \R^4_1, \, p \mapsto \sigma_F(p) : = a_0(p)
  \]
defines a spacelike immersion
(cf. \cite{MN-TAMS})
which corresponds to
the classical {\it middle sphere congruence} of $F$ (cf. \cite{Blaschke, MN-TAMS}).

The middle frame field $A$ along $F$ is adapted to the $L$-Gauss map
$\sigma_F$. In fact,
if $T\R^4_1$ denotes the tangent bundle of $\R^4_1$, then the bundle induced
by $\sigma_F$ over $M$ splits into the direct sum
\[
 \sigma_F^\ast (T \R^4_1) = T(\sigma_F) \oplus N(\sigma_F),
  \]
where $T(\sigma_F) =\span\{a_2, a_3\}$ is the tangent bundle of $\sigma_F$
and $N(\sigma_F) =\span\{a_1, a_4\}$ its normal bundle.

The first fundamental form of $\sigma_F$, i.e., the metric induced by
$\sigma_F$ on $M$, has the expression
\[
 g_\sigma=\langle d\sigma_F, d\sigma_F\rangle
  = (\alpha^2_0)^2 +(\alpha^3_0)^2,
  \]
and $\alpha^2_0$, $\alpha^3_0$ defines an orthonormal coframe field on $M$.

As $d\sigma_F(TM) = \span\{a_2, a_3\}$,
it follows from \eqref{lframe2} that
\[
  \alpha^1_0 = 0 = \alpha^4_0.
  \]
From the exterior derivative of these equations, we have
\[
\aligned
 0 &= d\alpha^1_0 = -\alpha^1_2 \wedge\alpha^2_0 - \alpha^1_3\wedge\alpha^3_0,\\
 0 &= d\alpha^4_0 = -\alpha^4_2 \wedge\alpha^2_0 - \alpha^4_3\wedge\alpha^3_0
\endaligned
\]
and then, by Cartan's Lemma,
\[
 \alpha^{\nu}_i = h^{\nu}_{i2}\alpha^2_0 + h^{\nu}_{i3}\alpha^3_0,
 \quad  h^{\nu}_{ij}=h^{\nu}_{ji} \quad \nu=1,4;\, i,j =2,3,
  \]
where the functions $h^{\nu}_{ij}$ are the components of the
second fundamental form of $\sigma_F$,
\[
 \Pi = \sum_{i,j=2,3} h^1_{ij}\alpha^i_0\alpha^j_0\otimes a_4
 + \sum_{i,j=2,3} h^4_{ij}\alpha^i_0\alpha^j_0 \otimes a_1.
   \]
From \eqref{conn-form}, it follows that
\[
  (h^1_{ij})= \begin{pmatrix} 1& 0\\0& -1 \end{pmatrix},
  \quad  (h^4_{ij})= \begin{pmatrix} p_1& p_2\\p_2&p_3 \end{pmatrix}.
   \]
The mean curvature vector of $\sigma_F$ is half the
trace of $\Pi$
with respect to $g_\sigma$,
\[
  2 \mathbf{H} = (p_1 + p_3) a_1.
  \]

\begin{remark}
Note that $\mathbf{H}$ is a null section of the
normal bundle $N(\sigma_F)$, i.e., $\langle\mathbf{H}, \mathbf{H} \rangle=0$.
Moreover, $\mathbf{H}\equiv 0$ on $M$  if and only if
$p_1+ p_3$ vanishes identically on $M$ if and only if
the Legendrian immersion $F: M \to \mathbf\Lambda$ is $L$-minimal
(cf. \cite{MN-TAMS}).
\end{remark}


With respect to the null frame field $\{a_1, a_4\}$,
the normal connection $\nabla^\perp$ in the normal bundle $N(\sigma)$
of $\sigma$ is
given by
\[
 \nabla^\perp a_1 =  \alpha^1_1 \otimes a_1, \quad
  \nabla^\perp a_4 =  -\alpha^1_1 \otimes a_4.
  \]
In particular, we have
\begin{equation}\label{H-normal-deriv}
 2\nabla^\perp \mathbf{H} =\left[d(p_1 +p_3) + (p_1 +p_3) \,\alpha^1_1\right] a_1,
   \end{equation}
so that the parallel condition $\nabla^\perp \mathbf{H} = 0$ takes the form
\begin{equation}\label{gLm-eq}
 d(p_1+p_3) + 2(p_1+p_3)(q_2\alpha^2_0 -q_1 \alpha^3_0) = 0.
  \end{equation}


\section{The proof of Theorem \ref{thm:A}}\label{s:thm:A}

In this section, for any nondegenerate Legendre immersion $F : M \to \Lambda$,
we introduce a quartic differential $\Q_F$ and a quadratic differential $\P_F$.
Theorem \ref{thm:A} will be proved using some results
about Legendre immersions with holomorphic $\Q_F$ (cf. Section \ref{ss:hol-diff})
and the interpretation of such immersions as $T$-transforms of $L$-minimal isothermic
surfaces (cf. Section \ref{ss:s-L-iso}).

\subsection{Holomorphic differentials for Legendre immersions}\label{ss:hol-diff}
Let $M$ be an oriented surface and let $F : M \to \Lambda$
be a nondegenerate Legendrian immersion into the Laguerre space.
Let $A : M\to L$ be the middle frame field along $F$ and let $\alpha = A^{-1}dA$
denote its Maurer--Cartan form.
The metric $(\alpha^2_0)^2 + (\alpha^3_0)^2$ and the area element
$\alpha^2_0\wedge\alpha^3_0$ induced by $A$ determine on $M$ an oriented conformal
structure and hence, by the existence of isothermal coordinates, a unique compatible
complex structure which makes $M$ into a Riemann surface. In terms of the middle
frame
field $A$, the complex structure is characterized by the property that
the complex-valued  1-form
\begin{equation}\label{1-0form}
 \varphi = \alpha^2_0 +i \alpha^3_0
  \end{equation}
is of type $(1,0)$.

\begin{defn}
Let $F : M\to \Lambda$ be a nondegenerate Legendrian immersion.
The complex-valued quartic differential form given by
\begin{equation}\label{quartic}
 \Q_F = Q  \varphi^4, \quad Q := \frac{1}{2}(p_1 - p_3) - i p_2,
  \end{equation}
and the complex-valued quadratic differential form given by
\begin{equation}\label{quadratic}
 \P_F = P \varphi^2, \quad P :=  p_1 + p_3
  \end{equation}
are globally defined on the Riemann surface $M$.
\end{defn}

\begin{remark}
The quartic differential $\Q_F$ was considered by the authors
for $L$-minimal surfaces \cite{MN-TAMS}. For $L$-minimal surfaces,
$\Q_F$ is holomorphic. The quadratic differential
$\P_F$ vanishes exactly for $L$-minimal surfaces.
\end{remark}

We now collect some useful facts about these differentials.
We begin with a simple observation.

\begin{lemma} The quartic differential $\Q_F$ is holomorphic if and only if
\begin{equation}\label{cns-holQ}
 dQ \wedge \varphi = -4(q_2\alpha^2_0 - q_1\alpha^3_0) Q \wedge \varphi.
  \end{equation}
\end{lemma}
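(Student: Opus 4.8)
The statement to prove is the Lemma: $\Q_F = Q\,\varphi^4$ is holomorphic if and only if $dQ \wedge \varphi = -4(q_2\alpha^2_0 - q_1\alpha^3_0)\,Q \wedge \varphi$. The plan is to unwind the definition of holomorphicity of a quartic differential on the Riemann surface $M$ and translate it into the moving-frame language. Recall that $\Q_F = Q\varphi^4$ is holomorphic precisely when, in a local holomorphic coordinate $z$ with $\varphi = \lambda\, dz$ for some nonvanishing complex function $\lambda$, the coefficient of $dz^4$ is holomorphic, i.e. $\bar\partial(Q\lambda^4) = 0$. Equivalently, and more invariantly, $\Q_F$ is holomorphic iff its $\bar\partial$-part vanishes, which can be written as $d(Q\varphi^4)$ having no $\varphi^4 \wedge \bar\varphi$ component — but since $\varphi^4$ is already a section of $(K_M)^{\otimes 4}$ and $d$ does not directly act on it, the clean route is: $\Q_F$ is holomorphic iff $dQ \wedge \varphi + 4Q\, d\varphi \equiv 0 \pmod{\varphi \wedge \text{(something)}}$, which I make precise below.

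First I would compute $d\varphi$. From $\varphi = \alpha^2_0 + i\alpha^3_0$ and the structure equations \eqref{se0}, namely $d\alpha^2_0 = q_1\,\alpha^2_0\wedge\alpha^3_0$ and $d\alpha^3_0 = q_2\,\alpha^2_0\wedge\alpha^3_0$, one gets $d\varphi = (q_1 + iq_2)\,\alpha^2_0\wedge\alpha^3_0$. Using $\alpha^2_0\wedge\alpha^3_0 = \frac{i}{2}\,\varphi\wedge\bar\varphi$ (up to the sign fixed by the orientation $\alpha^2_0\wedge\alpha^3_0 > 0$), this becomes $d\varphi = \mu\, \varphi\wedge\bar\varphi$ for an explicit $\mu$; and one checks that $q_2\alpha^2_0 - q_1\alpha^3_0$ is exactly the imaginary part combination appearing once we write things in terms of $\varphi$, so that $d\varphi = -i(q_2\alpha^2_0 - q_1\alpha^3_0)\wedge\varphi + (\text{term killed by wedging with }\varphi)$ — this is the key bookkeeping identity. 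Next, the holomorphicity of $Q\varphi^4$ is equivalent to the vanishing of the $(1,2)$-form obtained as the "$\bar\partial$" of the section, which in frame terms is $\bigl(dQ\wedge\varphi + 4Q\,d\varphi\bigr)$ having zero component along $\varphi^3\wedge(\varphi\wedge\bar\varphi) \sim \varphi^4$-type; concretely, after dividing out by $\varphi^3$ formally, holomorphicity says $dQ\wedge\varphi + 4Q\,d\varphi$ is a multiple of $\varphi\wedge\varphi = 0$, i.e. the $\bar\varphi$-components cancel. Substituting the expression for $d\varphi$ then gives $dQ\wedge\varphi + 4Q\bigl(-i(q_2\alpha^2_0 - q_1\alpha^3_0)\wedge\varphi\bigr) \equiv 0$, and since $(q_2\alpha^2_0 - q_1\alpha^3_0)\wedge\varphi = -\,\varphi\wedge(q_2\alpha^2_0 - q_1\alpha^3_0)$, rearranging yields exactly \eqref{cns-holQ} up to identifying the constant $i$ factors (which disappear because both sides are 2-forms of type $(1,1)$ and we are comparing $\varphi\wedge\bar\varphi$-coefficients). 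I would present this as: write $dQ = Q'\varphi + Q''\bar\varphi$ for functions $Q', Q''$, note $\Q_F$ holomorphic $\iff \bar\partial Q$ balances the $\bar\partial$ of $\varphi^4$ $\iff Q'' = -4Q\cdot(\text{coefficient of }\bar\varphi\text{ in }d\varphi/\varphi)$, and then observe that this single scalar equation is equivalent to the 2-form identity \eqref{cns-holQ} since wedging with $\varphi$ kills the $\varphi$-parts and extracts precisely the $\bar\varphi$-coefficients.

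The main obstacle is purely a matter of sign/normalization bookkeeping: getting the factor $-4$ and the precise combination $q_2\alpha^2_0 - q_1\alpha^3_0$ (rather than $q_1\alpha^2_0 + q_2\alpha^3_0$ or some conjugate) right, which hinges on the orientation convention $\alpha^2_0\wedge\alpha^3_0 > 0$ fixing $\varphi\wedge\bar\varphi = -2i\,\alpha^2_0\wedge\alpha^3_0$, and on correctly identifying which part of $d\varphi$ survives after wedging with $\varphi$. Once $d\varphi = (q_1+iq_2)\,\alpha^2_0\wedge\alpha^3_0$ is in hand, everything else is a short algebraic manipulation; I would carry it out by noting that the real 1-form $q_2\alpha^2_0 - q_1\alpha^3_0$ satisfies $(q_2\alpha^2_0 - q_1\alpha^3_0)\wedge\varphi = (q_2\alpha^2_0 - q_1\alpha^3_0)\wedge(\alpha^2_0 + i\alpha^3_0) = -q_1\,\alpha^2_0\wedge\alpha^3_0 + iq_2\,\alpha^2_0\wedge\alpha^3_0 = -(q_1 - iq_2)\,\alpha^2_0\wedge\alpha^3_0$, and comparing this with $d\varphi$ and with $dQ\wedge\varphi$ (whose only surviving term is the $\bar\varphi$-part of $dQ$ wedged with $\varphi$) pins down the constant. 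This completes the equivalence.
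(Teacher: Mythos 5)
Your overall strategy is the paper's: pass to a local coordinate $z$ with $\varphi=\lambda\,dz$, declare $\Q_F$ holomorphic iff $\partial_{\bar z}(Q\lambda^4)=0$, and use a formula for $d\varphi$ to convert this into \eqref{cns-holQ}. The problem is that the bookkeeping identity on which everything hinges is stated, and then ``verified,'' incorrectly, and as written the argument would not land on \eqref{cns-holQ}. The correct statement is the exact equality $d\varphi=(q_2\alpha^2_0-q_1\alpha^3_0)\wedge\varphi$: by \eqref{se0} one has $d\varphi=(q_1+iq_2)\,\alpha^2_0\wedge\alpha^3_0$, while
\[
(q_2\alpha^2_0-q_1\alpha^3_0)\wedge(\alpha^2_0+i\alpha^3_0)
= iq_2\,\alpha^2_0\wedge\alpha^3_0-q_1\,\alpha^3_0\wedge\alpha^2_0
=(q_1+iq_2)\,\alpha^2_0\wedge\alpha^3_0,
\]
because $\alpha^3_0\wedge\alpha^2_0=-\alpha^2_0\wedge\alpha^3_0$. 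Your computation of this wedge product drops that minus sign and returns $-(q_1-iq_2)\,\alpha^2_0\wedge\alpha^3_0$, which does \emph{not} equal $d\varphi$; likewise your intermediate claim that $d\varphi=-i(q_2\alpha^2_0-q_1\alpha^3_0)\wedge\varphi$ plus a term killed by $\varphi$ carries a spurious factor $-i$ and a spurious correction term. If you ``pin down the constant'' by comparing these expressions, you will arrive at a condition that differs from \eqref{cns-holQ} by signs and factors of $i$, so the equivalence you are asked to prove is not actually established.

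Once the identity $d\varphi=(q_2\alpha^2_0-q_1\alpha^3_0)\wedge\varphi$ is in place, none of the $\varphi\wedge\bar\varphi$ gymnastics is needed: differentiating $\varphi=\lambda\,dz$ gives $\bigl(d\lambda-\lambda(q_2\alpha^2_0-q_1\alpha^3_0)\bigr)\wedge\varphi=0$, hence
\[
d(Q\lambda^4)\wedge\varphi=\lambda^4\bigl[dQ+4(q_2\alpha^2_0-q_1\alpha^3_0)Q\bigr]\wedge\varphi,
\]
and the left-hand side vanishes iff $\partial_{\bar z}(Q\lambda^4)=0$, i.e.\ iff $\Q_F$ is holomorphic. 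So the gap is a fixable sign error rather than a conceptual one; with the sign corrected your argument collapses to exactly the proof given in the paper.
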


\begin{proof}
Taking the exterior derivative of \eqref{1-0form}
and using the structure equations give
\begin{equation}\label{d-omega}
 d \varphi = (q_2\alpha^2_0 - q_1\alpha^3_0) \wedge \varphi.
 \end{equation}
Let $z$ be a local complex coordinate on $M$, so that
\begin{equation}\label{omega-z}
\varphi = \lambda dz, \quad \lambda \neq 0.
\end{equation}
Then, locally,
\[
 \Q_F = Q\lambda^4 (dz)^4.
 \]
Exterior differentiation of \eqref{omega-z} and use of \eqref{d-omega} give
\begin{equation}\label{d-omega-z}
 \left(d\lambda - \lambda (q_2\alpha^2_0 - q_1\alpha^3_0) \right) \wedge \varphi = 0.
  \end{equation}
By \eqref{d-omega-z}, it is easily seen that condition \eqref{cns-holQ} holds
if and only if
\[
d(Q\lambda^4)\wedge \varphi =
\lambda^4\left[dQ + 4(q_2\alpha^2_0 - q_1\alpha^3_0)Q\right]\wedge \varphi = 0,
\]
that is, if and only if $\frac{\partial}{\partial \bar z} (Q\lambda^4) = 0$.

\end{proof}

Next, we prove the following.

\begin{prop}\label{prop:hol-cond}
Let $F: M \to \Lambda$ be a nondegenerate Legendrian immersion.
Then:

\begin{enumerate}

\item $\Q_F$ is holomorphic
if and only if the $L$-Gauss map of $F$ has parallel mean curvature
vector.

\item If $\Q_F$ is holomorphic, then
$\P_F$ is holomorphic.

\item If $\Q_F$ is holomorphic and $\P_F$ is non-zero, then $F$
is $L$-isothermic. 

\item If $\Q_F= Q\varphi^4$ is holomorphic and $\P_F = P\varphi^2 \neq 0$, then
$$Q = c P^2,$$
for a real constant $c$.

\end{enumerate}
\end{prop}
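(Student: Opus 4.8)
The plan is to establish the four statements in sequence, with (1) carrying the computational weight and (2)--(4) following from it with little extra work. Throughout write $\mu := q_2\alpha^2_0 - q_1\alpha^3_0$, the real $1$-form for which $d\varphi = \mu\wedge\varphi$ by \eqref{d-omega}; recall from \eqref{conn-form} that $\alpha^1_1 = 2\mu$, from Section~\ref{ss:L-Gauss-map} that $2\mathbf{H} = (p_1+p_3)a_1 =: Pa_1$, and from \eqref{H-normal-deriv} that $\nabla^\perp\mathbf{H} = 0$ is equivalent to $dP + 2P\mu = 0$ (which is \eqref{gLm-eq}). For part (1) I would begin from the preceding Lemma, by which $\Q_F$ is holomorphic if and only if $(dQ + 4Q\mu)\wedge\varphi = 0$, cf. \eqref{cns-holQ}. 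Expanding $dQ\wedge\varphi$ using $Q = \tfrac12(p_1-p_3) - ip_2$ from \eqref{quartic}, writing each $dp_k$ in the orthonormal coframe $(\alpha^2_0,\alpha^3_0)$, and separating real and imaginary parts gives two scalar equations (coefficients of $\alpha^2_0\wedge\alpha^3_0$). The structure equations \eqref{se3} and \eqref{se4} supply two further scalar identities among the same derivative coefficients of $p_1,p_2,p_3$. Adding and subtracting these four relations in the appropriate combinations, the undetermined derivative coefficients cancel and what remains is exactly that the $\alpha^2_0$- and $\alpha^3_0$-components of $\tfrac12 dP$ equal $-q_2P$ and $q_1P$, i.e. $dP + 2P\mu = 0$. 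Every step being reversible, this shows that $\Q_F$ is holomorphic precisely when $dP + 2P\mu = 0$, i.e. when $\nabla^\perp\mathbf{H} = 0$. I expect the bookkeeping here to be the main obstacle: one has to verify that the two scalar holomorphicity conditions, taken modulo the two structure-equation identities, collapse to the single vector identity $dP = -2P\mu$ and yield nothing further.

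Part (2) is then immediate: the computation of the preceding Lemma, applied to a quadratic instead of a quartic differential, shows that $\P_F = P\varphi^2$ is holomorphic if and only if $(dP + 2P\mu)\wedge\varphi = 0$, and part (1) gives the stronger identity $dP + 2P\mu = 0$ whenever $\Q_F$ is holomorphic. For part (3), assume $\Q_F$ holomorphic and $\P_F\not\equiv 0$, with $M$ connected (otherwise one argues componentwise). Along any smooth curve, $P$ solves the linear homogeneous ODE $\dot P = -2P\,\mu(\dot\gamma)$; hence if $P$ vanished at one point it would vanish identically on $M$, contradicting $\P_F\not\equiv 0$, so $P$ is nowhere zero. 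Then $dP = -2P\mu$ gives $\mu = -\tfrac12\,d\log|P|$, whence $d\mu = 0$. On the other hand, differentiating $\mu = q_2\alpha^2_0 - q_1\alpha^3_0$ and using \eqref{se0} and \eqref{se2} yields $d\mu = p_2\,\alpha^2_0\wedge\alpha^3_0$. Therefore $p_2\equiv 0$, which by the characterization recalled in Section~\ref{ss:iso} means precisely that $F$ is $L$-isothermic.

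For part (4), part (3) gives $p_2\equiv 0$, so $Q = \tfrac12(p_1-p_3)$ is real, and $P$ is nowhere zero, so $Q/P^2$ is a well-defined smooth real function on the connected surface $M$. With $Q$ real, the holomorphicity condition $(dQ + 4Q\mu)\wedge\varphi = 0$ is now a \emph{real} $1$-form wedged with $\varphi = \alpha^2_0 + i\alpha^3_0$, and since $(\alpha^2_0,\alpha^3_0)$ is a coframe this forces $dQ + 4Q\mu = 0$, i.e. $dQ = -4Q\mu$. Combining this with $dP = -2P\mu$ from part (1),
\[
d\!\left(\frac{Q}{P^2}\right) = \frac{dQ}{P^2} - \frac{2Q\,dP}{P^3} = \frac{-4Q\mu}{P^2} + \frac{4Q\mu}{P^2} = 0 ,
\]
so $Q/P^2$ is constant; denoting the value by $c\in\R$ gives $Q = cP^2$, as required.
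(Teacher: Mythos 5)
Your proposal is correct and follows essentially the same route as the paper's proof: part (1) by expanding $dQ\wedge\varphi$ in the middle coframe and reducing via the structure equations \eqref{se3}--\eqref{se4} to the parallel-mean-curvature condition \eqref{gLm-eq}, part (2) by comparing the holomorphicity conditions for $\P_F$ and $\Q_F$, part (3) by observing that $dP=-2P\mu$ forces $\mu$ (i.e.\ $\tfrac12\alpha^1_1$) to be closed while $d\mu=p_2\,\alpha^2_0\wedge\alpha^3_0$, and part (4) by showing $d(Q/P^2)=0$. Your two small refinements --- the ODE argument showing $P$ is nowhere zero in (3), and using the reality of $Q$ to get $dQ+4Q\mu=0$ exactly in (4) rather than invoking that a real-valued holomorphic function is constant --- are sound and, if anything, tighten the paper's argument.
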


\begin{proof} (1) It suffices to prove that \eqref{cns-holQ} is equivalent
to the parallel condition equation \eqref{gLm-eq}.
Writing out the left and right hand side
of \eqref{cns-holQ} using the structure equations, we get
\[
\begin{split}
 &dQ \wedge \varphi = -\frac{1}{2}(dp_1 +dp_3) \wedge \alpha^2_0
+ \frac{i}{2}(dp_1+dp_3) \wedge\alpha^3_0\\
 & \quad + (q_1p_3-3q_1p_1 -4q_2p_2) \alpha^2_0\wedge \alpha^3_0
+i (3q_2p_3 +4q_1p_2 -q_2p_1)\alpha^2_0\wedge \alpha^3_0
\end{split}
\]
and
\[
\begin{split}
-4(q_2\alpha^2_0 - q_1\alpha^3_0) Q \wedge \varphi =&
\left[2q_1p_3-2q_1p_1 -4q_2p_2\right]\alpha^2_0\wedge \alpha^3_0\\
 &\quad +i\left[2q_2p_3+4q_1p_2-2q_2p_1)\right]\alpha^2_0\wedge \alpha^3_0.
\end{split}
 \]
Thus, \eqref{cns-holQ} is equivalent to
\begin{eqnarray}
-\frac{1}{2}(dp_1 +dp_3) \wedge \alpha^2_0 -
(q_1p_1 +q_1p_3) \alpha^2_0\wedge \alpha^3_0 &=& 0,\nonumber \\
\frac{1}{2}(dp_1 +dp_3) \wedge \alpha^3_0
 + (q_2p_3 +q_2p_1) \alpha^2_0\wedge \alpha^3_0 &=& 0,\nonumber
\end{eqnarray}
which in turn is equivalent to the parallel condition
\[
d(p_1+p_3) + 2(p_1+p_3)(q_2\alpha^2_0 -q_1 \alpha^3_0) = 0,
\]
as claimed.

(2) Observe that the exterior derivative of $\varphi$ can be written as
\begin{equation}\label{d-omega-quadratic}
 d \varphi = (q_2\alpha^2_0 - q_1\alpha^3_0) \wedge \varphi.
\end{equation}
By reasoning as above,
 $\P_F$ is holomorphic if and only if
\begin{equation}\label{cns-holP}
 dP \wedge \varphi = -2 (q_2\alpha^2_0 - q_1\alpha^3_0) P \wedge \varphi.
  \end{equation}
The claim follows
from the condition $dP + 2P(q_2\alpha^2_0 - q_1\alpha^3_0)=0$,
which amounts to the condition that $\Q_F$ be holomorphic.

(3) If $\P_F$ is non-zero, i.e., $F$ is not $L$-minimal, it follows
from \eqref{gLm-eq} that $d\alpha^1_1 =0$. On the other hand,
the structure equations give
\[
 d\alpha^1_1  = 2p_2 \alpha^2_0 \wedge \alpha^3_0,
 \]
which implies $p_2 = 0$. Thus $F$ is $L$-isothermic.

(4) Under the given hypotheses, it follows
from assertion (3)
that $p_2 = 0$
and then that condition \eqref{cns-holQ} can be written
\[
 dQ  + 4\mu Q \equiv 0, \mod \varphi,
  \]
where $\mu =q_2\alpha^2_0 - q_1\alpha^3_0$ and $d\varphi = \mu \wedge \varphi$.
Moreover, condition \eqref{cns-holP} that $\P_F$ be holomorphic
can be written
$$
  dP + 2\mu P \equiv 0, \mod \varphi.
   $$
Actually, $dP + 2\mu P=0$.
It then follows that
$$
  d\left(\frac{Q}{P^2}\right) \equiv 0,\mod \varphi.
   $$
This proves that the real-valued function $Q/P^2$ is holomorphic, and hence
a constant function, as claimed.
\end{proof}

We are now ready to prove our next result.

\begin{prop}\label{thm:hQ-iff-W-or-special}
The quartic differential $\Q_F$ of a nondegenerate Legendre immersion
$F : M \to \Lambda$ is holomorphic if and only if
the immersion is $L$-minimal,
in which case the quadratic differential $\P_F$ vanishes on $M$,
or is $L$-isothermic
with Blaschke potential
$\Phi =e^u$
satisfying the second order partial differential equation
\begin{equation}\label{special-eq}
 \Delta u  = ce^{-2u}, 
  \end{equation}
where $c$ is a real constant.
\end{prop}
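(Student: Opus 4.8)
The plan is to combine Proposition~\ref{prop:hol-cond} with the explicit parametrization of $L$-isothermic immersions by their Blaschke potential from Section~\ref{ss:iso}. Throughout the $L$-isothermic regime I would work in isothermal curvature line coordinates $z=x+iy$, so that $\alpha^2_0=e^u\,dx$, $\alpha^3_0=e^u\,dy$, $p_2=0$, and, by \eqref{iso1}, $q_1=-e^{-u}u_y$, $q_2=e^{-u}u_x$. A one-line computation then yields the key identity
\[
 q_2\alpha^2_0-q_1\alpha^3_0 = du,
\]
which turns the parallel mean curvature equation \eqref{gLm-eq} into $d\big(e^{2u}(p_1+p_3)\big)=0$, and this is the form in which I would use it on both sides of the equivalence.

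For sufficiency, if $F$ is $L$-minimal then $p_1+p_3\equiv 0$, so $\P_F=0$ and the $L$-Gauss map has vanishing, hence parallel, mean curvature vector; thus $\Q_F$ is holomorphic by Proposition~\ref{prop:hol-cond}(1) (this also recovers the fact from \cite{MN-TAMS}). If instead $F$ is $L$-isothermic with $\Phi=e^u$ a solution of $\Delta u=ce^{-2u}$, then $e^{-2u}\Delta u=ce^{-4u}$; substituting this into the right-hand side of \eqref{dW} makes it vanish identically, so $e^{2u}(p_1+p_3)$ is (locally) constant. By the identity above this is exactly \eqref{gLm-eq}, so $\Q_F$ is again holomorphic by Proposition~\ref{prop:hol-cond}(1).

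For necessity, assume $\Q_F$ is holomorphic. By Proposition~\ref{prop:hol-cond}(2), $\P_F$ is holomorphic. If $\P_F\equiv 0$ then $p_1+p_3\equiv 0$ and $F$ is $L$-minimal. Otherwise $P=p_1+p_3$ is nonzero on a dense open subset of $M$, where Proposition~\ref{prop:hol-cond}(3) forces $p_2=0$; by continuity $p_2\equiv 0$ on $M$, so $F$ is $L$-isothermic, and I pass to isothermal curvature line coordinates with Blaschke potential $\Phi=e^u$. Since $\Q_F$ is holomorphic, \eqref{gLm-eq} holds by Proposition~\ref{prop:hol-cond}(1), i.e.\ $d\big(e^{2u}(p_1+p_3)\big)=0$; comparing with \eqref{dW}, the $1$-form on its right-hand side must vanish, that is $(e^{-2u}\Delta u)_x+4u_x\,e^{-2u}\Delta u=0$ and likewise in $y$. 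Multiplying through by $e^{4u}$ rewrites these as $(e^{2u}\Delta u)_x=0=(e^{2u}\Delta u)_y$, so $e^{2u}\Delta u$ equals a real constant $c$, i.e.\ $\Delta u=ce^{-2u}$, which is \eqref{special-eq}.

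The computations above are routine once the identity $q_2\alpha^2_0-q_1\alpha^3_0=du$ is in hand; the only point requiring real care is the density argument that promotes $p_2=0$ from the open set where $\P_F\neq 0$ to all of $M$, so that the $L$-isothermic structure, and hence the PDE \eqref{special-eq}, is defined globally. If one prefers to bypass \eqref{dW} in the necessity step, the same conclusion follows from Proposition~\ref{prop:hol-cond}(4): with $p_2=0$ one has $Q=\frac{1}{2}(p_1-p_3)=-\frac{1}{2}e^{-2u}\Delta u$ by \eqref{iso2}, and $P=p_1+p_3=2m\,e^{-2u}$ for a real constant $m$ (from the already-established $d(e^{2u}(p_1+p_3))=0$), so the relation $Q=cP^2$ reads $\Delta u=-8cm^2\,e^{-2u}$.
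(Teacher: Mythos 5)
Your proposal is correct, and its overall architecture matches the paper's: both reduce everything to Proposition~\ref{prop:hol-cond} and the Blaschke-potential machinery of Section~\ref{ss:iso}, and the sufficiency direction (substituting $\Delta u = ce^{-2u}$ into \eqref{dW} to get $p_1+p_3=ke^{-2u}$ and hence the parallel condition \eqref{gLm-eq}) is essentially identical. The one genuine difference is in the necessity step for the non-$L$-minimal case: the paper computes $\Q_F=-\tfrac12(e^{-2u}\Delta u)e^{4u}(dz)^4$ directly in the isothermic chart and concludes that its real-valued coefficient, being holomorphic, is constant; you instead run the implication $\Q_F$ holomorphic $\Rightarrow$ \eqref{gLm-eq} $\Rightarrow$ $d\bigl(e^{2u}(p_1+p_3)\bigr)=0$ and then read the PDE off the vanishing of the right-hand side of \eqref{dW}. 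The two computations are equally elementary and land on the same equation; the paper's is marginally shorter, while yours makes the role of the parallel mean curvature condition (and hence the link to Theorem~\ref{thm:B}) more transparent, and your closing remark via Proposition~\ref{prop:hol-cond}(4) gives a third equivalent derivation. You are also slightly more careful than the paper on one point: the paper's proof only treats the cases $\P_F\equiv 0$ and $\P_F$ nowhere vanishing, whereas your density-plus-continuity argument handles the isolated zeros of a nontrivial holomorphic $\P_F$, so that $p_2\equiv 0$ (and hence the $L$-isothermic structure) is obtained on all of a connected $M$. The key identity $q_2\alpha^2_0-q_1\alpha^3_0=du$ you isolate is exactly the statement $\alpha^1_1=2du$ implicit in the paper's frame \eqref{conn-form} for isothermic charts, so no new verification is needed there.
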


\begin{proof}
If $\Q_F$ is holomorphic and the holomorphic quartic differential $\P_F$
vanishes, then $F$ is $L$-minimal.
If instead $\P_F$ is nowhere vanishing, then
$F$ is $L$-isothermic by Proposition \ref{prop:hol-cond} (3).
Let $z = x+iy : U\subset M  \to \C$ be an isothermic
chart, so
that the middle coframing
$(\alpha^2_0, \alpha^3_0)$ takes the form
$\alpha^2_0= e^udx$ and $\alpha^3_0 = e^udy$,
where $\Phi =e^u$ is the Blaschke potential  (cf. Section \ref{ss:iso}).

 From \eqref{iso1} and \eqref{iso2} we get
\[
 \Q_F = \frac{1}{2}(p_1 - p_3) \omega^4 =  \text{\sc j}e^{4u} (dz)^4
=-\frac{1}{2}(e^{-2u} \Delta u) e^{4u}(dz)^4.
   \]
Since $\Q_F$ is holomorphic,
\[
(e^{-2u} \Delta u) e^{4u} = c,
  \]
for a constant $c\in \R$, that is
\[
 \Delta u  = ce^{-2u}.
  \]

Conversely, if we assume that $\Phi = e^{u}$ satisfy the equation
\eqref{special-eq}, then
the right hand side of \eqref{dW} vanishes identically, which implies
that $p_1 + p_3 = k e^{-2u}$, for a constant $k\in \R$.
A direct computation shows that $p_1 + p_3 = k e^{-2u}$ satisfies
the equation
\[
 d(p_1+p_3) + 2(p_1+p_3)(q_2\alpha^2_0 -q_1 \alpha^3_0) = 0.
	\]
This expresses the fact that the $L$-Gauss map
of $F$, $\sigma_F = a_0$, has
parallel mean curvature vector, or equivalently,
that the quartic differential $Q_F$ is holomorphic.
\end{proof}

\subsection{Special $L$-isothermic surfaces and Laguerre deformation}\label{ss:s-L-iso}

\begin{defn}
A nondegenerate $L$-isothermic immersion $F : M \to \Lambda$
is called {\it special} if its Blaschke potential $\Phi = e^u$
satisfies the second order partial differential equation \eqref{special-eq}
of Theorem \ref{thm:hQ-iff-W-or-special}, i.e.,
\[
 \Delta u  = ce^{-2u}, \quad c\in \R.
  \]
The constant $c$ is called the {\it character} of the special $L$-isothermic
surface $F$.
\end{defn}


\begin{ex}[$L$-minimal isothermic surfaces]

In terms of the Laguerre invariants, $L$-minimal surfaces are characterized
by the condition $p_1 +p_3 = 0$ (cf. \cite{MN-TAMS}). Therefore, if a
nondegenerate $L$-isothermic immersion $F : M \to \Lambda$ is also $L$-minimal,
the right hand side of \eqref{dW} is identically zero. This implies
\[
 d\left(e^{2u} \Delta u\right)= 0,
  \]
and hence the following.

\begin{prop}\label{prop:L-min-iso-are-special}
 Any nondegenerate $L$-mi\-ni\-mal
isothermic immersion $F : M \to \Lambda$ is special $L$-isothermic.
\end{prop}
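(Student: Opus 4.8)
The statement to prove is Proposition~\ref{prop:L-min-iso-are-special}: every nondegenerate $L$-minimal isothermic immersion is special $L$-isothermic. The plan is to show that the $L$-minimal condition forces the Blaschke potential $\Phi = e^u$ to satisfy $\Delta u = c\,e^{-2u}$ for some real constant $c$, which is exactly the defining equation of a special $L$-isothermic surface.

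First, since $F$ is $L$-isothermic, by Section~\ref{ss:iso} we may work in an isothermic chart $z = x+iy$ in which the middle coframe is $\alpha^2_0 = e^u dx$, $\alpha^3_0 = e^u dy$, and the Blaschke potential is $\Phi = e^u$. The key input is equation~\eqref{dW}, which expresses $d\big(e^{2u}(p_1+p_3)\big)$ in terms of $e^{-2u}\Delta u$ and its derivatives. Now invoke the $L$-minimal hypothesis: by the characterization recalled in the example (and in \cite{MN-TAMS}), $F$ being $L$-minimal means $p_1 + p_3 = 0$ identically on $M$. Substituting this into the left-hand side of~\eqref{dW} makes it vanish, so the right-hand side of~\eqref{dW} is identically zero; that is, both coefficient functions
\[
\big(e^{-2u}\Delta u\big)_x + 4u_x\big(e^{-2u}\Delta u\big), \qquad
\big(e^{-2u}\Delta u\big)_y + 4u_y\big(e^{-2u}\Delta u\big)
\]
vanish. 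Equivalently, writing $w := e^{-2u}\Delta u$, we have $w_x + 4u_x w = 0$ and $w_y + 4u_y w = 0$, i.e. $d\log|w| = -4\,du$ wherever $w \neq 0$, which integrates to $w = k\,e^{-4u}$ for a locally constant $k$. Hence $e^{-2u}\Delta u \cdot e^{4u} = e^{2u}\Delta u$ is locally constant, i.e. $d\big(e^{2u}\Delta u\big) = 0$, as the example already asserts. On a connected domain this gives $e^{2u}\Delta u = c$ for a real constant $c$, that is, $\Delta u = c\,e^{-2u}$.

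The only remaining point is to check that this constant $c$ is genuinely well defined — i.e., globally constant on $M$, not merely locally constant. Since $e^{2u}\Delta u$ is a smooth function on $M$ whose differential vanishes, it is constant on each connected component; restricting to a connected surface (or arguing component by component) gives a single character $c \in \R$. This establishes that $F$ is special $L$-isothermic of character $c$. I do not expect any serious obstacle here: the proof is essentially the observation that the $L$-minimal equation $p_1 + p_3 = 0$ collapses the structural identity~\eqref{dW}, and the mildest point requiring care is passing from the local primitive argument to the global constancy of $e^{2u}\Delta u$, which is immediate once one notes~\eqref{dW} is a coordinate-invariant identity and $e^{2u}\Delta u = -2\,\text{\sc j}\,e^{4u}$ up to sign is (a multiple of) a globally defined quantity on $M$.
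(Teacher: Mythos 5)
Your proof is correct and follows essentially the same route as the paper: the paper's own argument (given in the example preceding the proposition) likewise substitutes $p_1+p_3=0$ into \eqref{dW}, concludes the right-hand side vanishes, and deduces $d\left(e^{2u}\Delta u\right)=0$, hence $\Delta u = c\,e^{-2u}$. Your extra remarks on integrating the resulting first-order system and on the (local) constancy of $c$ just spell out details the paper leaves implicit.
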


Other examples of $L$-minimal isothermic surfaces include
$L$-minimal canal surfaces \cite{MN-REND-RM, MN-AMB}.

\end{ex}

\subsubsection{Special $L$-isothermic surfaces as $T$-transforms}

Let $F : M \to \Lambda$ be a special $L$-isothermic immersion. From
the proof of
Theorem \ref{thm:hQ-iff-W-or-special}, we have that the invariants $\text{\sc j}$ and
$\text{\sc w}$ of $F$ are given by
\begin{equation}\label{special-L-J}
   \text{\sc w} = ke^{-2u}, \quad
   \text{\sc j} = -\frac{1}{2}e^{-2u}\Delta u,
   \end{equation}
where $k$ is a real constant.

\begin{defn}
The constant $k$ will be referred to as the
{\it deformation} (or {\it spectral}) {\it parameter} of the special
$L$-isothermic immersion $F$.
\end{defn}

We have the following.

\begin{prop}\label{prop:special-as-Ttrans}
Any special $L$-isothermic immersion in Laguerre space is the $T$-transform of
an $L$-minimal isothermic immersion.
\end{prop}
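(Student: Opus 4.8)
The plan is to exploit the explicit construction of the $T$-family $\{F_m\}$ from Section \ref{ss:iso} together with the explicit form \eqref{special-L-J} of the invariants of a special $L$-isothermic immersion. Let $F : M \to \Lambda$ be special $L$-isothermic with character $c$ and deformation parameter $k$, so that in an isothermic chart $z = x+iy$ its Blaschke potential $\Phi = e^u$ satisfies $\Delta u = ce^{-2u}$ and its invariants are $\text{\sc w} = ke^{-2u}$, $\text{\sc j} = -\tfrac12 e^{-2u}\Delta u = -\tfrac12 c e^{-2u}$. First I would observe that the value $m = -k$ singles out a distinguished member of the spectral family: setting $\text{\sc w}_{-k} = \text{\sc w} - k e^{-2u} = 0$ in \eqref{J-L-m}, the invariant $p_1 + p_3 = \text{\sc w}_{-k} e^{2u}$ (more precisely, the quantity $e^{2u}(p_1+p_3) = K$ built from $\text{\sc w}$) vanishes identically, which is exactly the condition $p_1 + p_3 = 0$ characterizing $L$-minimal surfaces recalled in the Remark after \eqref{gLm-eq}. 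Thus $F_{-k}$ is $L$-minimal; and since it is built from the same Blaschke potential $\Phi$ via the connection form $\alpha^{(-k)}$, it is manifestly $L$-isothermic. Hence $F_{-k}$ is an $L$-minimal isothermic immersion.

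The second step is to identify $F$ itself within the spectral family of $F_{-k}$. Since $F_{-k}$ has Blaschke potential $\Phi$ and deformation parameter $0$, its own spectral family $(F_{-k})_{m'}$ is again constructed from $\Phi$ with shifted invariant $\text{\sc w}_{m'} = 0 + m' e^{-2u} = m' e^{-2u}$; taking $m' = k$ reproduces precisely the connection form $\alpha^{(0)}$ of the original $F$ (here one uses that $\text{\sc j}$ is unchanged along the family, being determined by $\Phi$ alone). By the uniqueness-up-to-Laguerre-equivalence statement at the end of Section \ref{ss:iso} — any nondegenerate $L$-isothermic immersion with Blaschke potential $\Phi$ is Laguerre equivalent to some $F_m$ — it follows that $F$ is Laguerre equivalent to $(F_{-k})_{k}$. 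Therefore $F$ and $F_{-k}$ have the same Blaschke potential and are not Laguerre equivalent (unless $k=0$, in which case $F$ is already $L$-minimal isothermic and there is nothing to prove), so by definition $F$ is a $T$-transform of the $L$-minimal isothermic immersion $F_{-k}$.

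I expect the only real subtlety to be bookkeeping with the two slightly different normalizations of the ``$\text{\sc w}$'' data — the global function $\text{\sc w} = \tfrac12(p_1+p_3)$ of the Remark in Section \ref{ss:L-surf-ge} versus the quantity $\text{\sc w}$ appearing in \eqref{J-L}, \eqref{J-L-m}, which is $Ke^{-2u}$ with $K$ a potential for $\eta_\Phi$ — and making sure the shift $m \mapsto \text{\sc w}_m = \text{\sc w} + me^{-2u}$ acts correctly so that the $L$-minimal locus corresponds to $m = -k$. Concretely, one checks from \eqref{dW} that for a special surface the right-hand side of \eqref{dW} equals $-d(ce^{-2u}\,\cdot\,\text{something})$, forcing $K = ke^{-2u}\cdot e^{2u} + \text{const}$, i.e. $p_1+p_3 = ke^{-2u}$ exactly as in \eqref{special-L-J}; then the reparametrization in \eqref{J-L-m} is a genuine shift of $k$ and the computation above goes through verbatim. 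Everything else — the Maurer--Cartan integrability of $\alpha^{(m)}$, the fact that $F_m$ is a nondegenerate Legendre immersion with middle frame $A^{(m)}$ — is supplied by Section \ref{ss:iso} and needs no reproof.
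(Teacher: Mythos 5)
Your proposal is correct and follows essentially the same route as the paper: both identify the $L$-minimal member of the spectral family built from the Blaschke potential $\Phi$ (the paper takes the member with $\text{\sc w}=0$ directly, you reach it by shifting the given surface by $m=-k$) and then match the invariants $\text{\sc w}_m$, $\text{\sc j}_m$ to recognize $F$ as the $T_k$-transform of that member. Your explicit attention to the normalization of $\text{\sc w}$ and to the degenerate case $k=0$ is a nice touch but does not change the substance of the argument.
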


\begin{proof}
According to Section \ref{ss:iso}, there exists, up to Laguerre equivalence,
a unique $L$-isothermic immersion $F$ with
Blaschke potential $\Phi = e^u$ satisfying \eqref{special-eq}
and with invariant functions
\[
   \text{\sc w} = 0, \quad
   \text{\sc j} = -\frac{1}{2}e^{-2u}\Delta u = -\frac{c}{2}e^{-4u}.
    \]
Since $\text{\sc w} = 0$, we have that $F$ is $L$-minimal.
Next, let $\tilde F$ be a special $L$-isothermic immersion with the same Blaschke
potential $\Phi$ as $F$
and with deformation parameter $k$. The discussion in Section \ref{ss:iso}
implies that $\tilde F$ is a $T_m$-transform of $F$. The invariants of $\tilde F$
are then given by
\begin{equation}\label{special-L-J-m}
   \text{\sc w}_m = me^{-2u}, \quad
   \text{\sc j}_m = \text{\sc j}= -\frac{c}{2}e^{-4u}.
  \end{equation}
From \eqref{special-L-J} and \eqref{special-L-J-m}, it follows that $m=k$.
\end{proof}

From Proposition \ref{thm:hQ-iff-W-or-special},
Proposition \ref{prop:L-min-iso-are-special}, and
Proposition \ref{prop:special-as-Ttrans}, we get the first main results
of the paper.

\vskip0.3cm
\noindent\textbf{Theorem A.}
\textit{The quartic differential $\Q_F$ of
a nondegenerate Legendre immersion $F : M \to  \Lambda$ is holomorphic
if and only if the immersion $F$ is $L$-minimal, in which case $\P_F$ vanishes,
or is locally the $T$-transform of an $L$-minimal isothermic
surface.}

\vskip0.3cm
In particular, if $F$ has holomorphic $\Q_F$ and zero $\P_F$, then
$F$ is $L$-isothermic if and only if it is $L$-minimal isothermic.

\section{The proof of Theorem \ref{thm:B}}\label{s:thm:B}

In this section we characterize
$L$-minimal isothermic surfaces and
their $T$-transforms (i.e., special $L$-isothermic
surfaces with non-zero deformation parameter)
in terms of the geometry of their $L$-Gauss maps.
Theorem \ref{thm:B} will be proved using
these characterizations,
which are given, respectively, in
Proposition \ref{prop:L-min-iso} and Proposition \ref{p:main}.

\subsection{The geometry of $L$-minimal isothermic surfaces}\label{ss:geo-min-iso}

The property of being $L$-minimal and $L$-isothermic is reflected in the
differential geometry of the $L$-Gauss map of $F$.
%
In the following result, the terminology used for hyperplanes of $\R^4_1$ is that introduced in Section \ref{ss:cyclo}.

\begin{prop}\label{prop:L-min-iso}
A nondegenerate $L$-minimal immersion $F : M \to \Lambda$
is $L$-isothermic if and only if its $L$-Gauss map $\sigma_F : M \to \R^4_1$
is restricted to lie in the hyperplane of $\R^4_1$ defined by the equation
\[
  \langle \sigma_F -O, v \rangle =0,
   \]
   for some point $O$ and some constant vector $v$.
%
%
In particular,
the $L$-Gauss map $\sigma_F$ of a nondegenerate $L$-mi\-ni\-mal
isothermic immersion $F$ has zero mean curvature
in some spacelike, timelike, or (degenerate) isotropic hyperplane of $\R^4_1$.
\end{prop}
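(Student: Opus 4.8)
The plan is to work with a middle frame field $A=(a_0,a):U\to L$ along $F$ and extract the hyperplane as a suitable constant vector. Since $F$ is $L$-minimal, $p_1+p_3=0$, so the invariants reduce to $p_1=-p_3=:\text{\sc j}$, $p_2$ (which vanishes iff $F$ is $L$-isothermic), and $q_1,q_2$. The structure equations \eqref{se0}--\eqref{se4}, specialized to $p_1+p_3=0$, give the differentials of $a_0,\dots,a_4$ in terms of the middle coframe $(\alpha^2_0,\alpha^3_0)$. The key observation is that the normal bundle $N(\sigma_F)=\span\{a_1,a_4\}$ carries the null normal connection $\nabla^\perp a_1 = \alpha^1_1\otimes a_1$, and from \eqref{conn-form} with $p_1+p_3=0$ one reads $\alpha^1_1 = 2q_2\alpha^2_0-2q_1\alpha^3_0$. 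Moreover $d\alpha^1_1 = 2p_2\,\alpha^2_0\wedge\alpha^3_0$ (used already in the proof of Proposition \ref{prop:hol-cond}(3)). So $F$ is $L$-isothermic $\iff p_2\equiv 0 \iff \alpha^1_1$ is closed.

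First I would prove the "only if" direction. Assume $p_2\equiv 0$. Then $\alpha^1_1$ is a closed $1$-form on the simply connected $U$ (shrinking $U$ if needed), so $\alpha^1_1 = d\log\mu$ for some positive function $\mu$. Consider the candidate vector field $v := \mu\, a_1$ (a section of the null normal line bundle). Using $da_1 = \alpha^1_1 a_1 + \alpha^2_1 a_2 + \alpha^3_1 a_3$ with $\alpha^2_1=\alpha^2_0$, $\alpha^3_1=-\alpha^3_0$ from \eqref{conn-form}, I would compute $dv = d\mu\, a_1 + \mu\,da_1 = \mu\,\alpha^1_1 a_1 + \mu(\alpha^1_1 a_1 + \alpha^2_0 a_2 - \alpha^3_0 a_3)$ — wait, this is not zero. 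The right construction is subtler: one wants a constant vector $v$ in the fixed space $\R^4_1$, not a parallel normal section. The honest approach is to look for a constant vector $v=\sum c^i a_i$ with the $c^i$ functions such that $dv=0$; differentiating and using the structure equations gives a linear first-order system for $(c^1,\dots,c^4)$, and I expect that when $p_2=0$ (and $p_1+p_3=0$) this system is compatible and admits a nonzero solution, with the solution lying in $\span\{a_1,a_4\}$ up to the flow — i.e. $v$ is, along $\sigma_F$, a section of $N(\sigma_F)$. Then $\langle\sigma_F-O,v\rangle$ has differential $\langle d\sigma_F,v\rangle = \langle\alpha^2_0 a_2+\alpha^3_0 a_3,v\rangle = 0$ since $v\in N(\sigma_F)$, so $\sigma_F-O$ stays in the hyperplane $v^\perp$ for a suitable basepoint $O$.

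For the converse, suppose $\sigma_F$ lies in a hyperplane $\langle\sigma_F-O,v\rangle=0$ with $v$ constant and nonzero. Differentiating twice: $\langle d\sigma_F,v\rangle=0$ forces $v\in N(\sigma_F)=\span\{a_1,a_4\}$ pointwise, so write $v = \phi a_1+\psi a_4$; differentiating $\langle d\sigma_F,v\rangle=0$ again and using $da_0$, $da_1$, $da_4$ from \eqref{conn-form} together with $0=dv$ should force $p_2=0$, hence $F$ is $L$-isothermic. The type of the hyperplane is then governed by $\langle v,v\rangle$: since $v=\phi a_1+\psi a_4$ and $\langle a_1,a_1\rangle=\langle a_4,a_4\rangle=0$, $\langle a_1,a_4\rangle=-1$ (from \eqref{lframe1}, $g_{14}=-1$), we get $\langle v,v\rangle = -2\phi\psi$, which can be positive, negative, or zero — giving the spacelike, timelike, or isotropic trichotomy of the "in particular" clause. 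Finally, $\sigma_F$ having zero mean curvature in this hyperplane follows because its mean curvature vector $2\mathbf H=(p_1+p_3)a_1=0$ in $\R^4_1$ already vanishes ($F$ is $L$-minimal), and the induced mean curvature in a totally geodesic hyperplane differs from the ambient one only by the (vanishing) normal component along $v$.

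The main obstacle I anticipate is the constancy of $v$: showing that the natural section of the null normal line bundle can be rescaled to a genuinely $\R^4_1$-constant vector. This is where $p_1+p_3=0$ is essential — it kills the $a_4$-component of $da_1$ that would otherwise obstruct parallelism — and where the closedness of $\alpha^1_1$ (equivalently $p_2=0$) provides the integrating factor. I would carry this out by writing the overdetermined system $dv=0$ in the frame, reducing it via \eqref{conn-form} with $p_1+p_3=0$ to the single scalar condition $\alpha^1_1$ closed plus an integrating-factor ODE, and invoking the Poincaré lemma on the simply connected neighborhood; the global statement then follows since the two middle frame fields differ by $\tilde A=(a_0,a_1,-a_2,-a_3,a_4)$, under which $a_1$ and the hyperplane are unchanged, so the locally defined $v$ patches to a global direction on $M$ (or, since the conclusion is local in nature as stated, one simply records the local result).
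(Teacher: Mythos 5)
Your overall strategy is the paper's: produce a constant vector $v\in\R^4_1$ that lies pointwise in the normal bundle $\span\{a_1,a_4\}$ of $\sigma_F$, so that $\langle d\sigma_F,v\rangle=0$ and $\sigma_F$ is trapped in the hyperplane through some $O$ orthogonal to $v$. Your converse is essentially the paper's argument and is fine (from $v=\ell_1a_1+\ell_4a_4$ and $dv=0$ one gets $\ell_4p_2=0$ with $\ell_4\neq0$, hence $p_2=0$), and the "in particular" clause is handled correctly. The genuine gap is in the forward direction, where the decisive step --- the compatibility of the overdetermined system $dv=0$ --- is left as something you "expect". Writing $v=\phi a_1+\psi a_4$ and using \eqref{conn-form} with $p_2=0$, $p_3=-p_1$, the system $dv=0$ is equivalent to the algebraic constraint $\phi=-p_1\psi$ together with the two ODEs $d\psi-\psi\alpha^1_1=0$ and $d\phi+\phi\alpha^1_1=0$. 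Closedness of $\alpha^1_1$ (equivalent to $p_2=0$) supplies the integrating factor $\psi=e^{2u}$, but it does \emph{not} by itself make $\phi=-p_1e^{2u}$ satisfy the second ODE: for that you need the extra identity
\[
dp_1+2p_1\,\alpha^1_1=0,
\]
which is \eqref{dp1} in the paper and follows from the Codazzi-type structure equations \eqref{se3}, \eqref{se4} only after imposing \emph{both} $p_1+p_3=0$ and $p_2=0$. So the reduction is not to "a single scalar condition plus an integrating-factor ODE"; there is one further identity to verify, and it is exactly the point where $L$-minimality enters beyond killing the $a_4$-component of $da_1$. Once that identity is checked, the explicit constant vector is $v=e^{2u}(-p_1a_1+a_4)$, with $\langle v,v\rangle=2p_1e^{4u}$; note that, in the paper's convention, the hyperplane is called spacelike, timelike, or isotropic according as its normal $v$ is timelike, spacelike, or isotropic, so the sign of $p_1$ governs the trichotomy. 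With \eqref{dp1} supplied, your argument closes and coincides with the paper's proof.
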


\begin{proof}
Let $A=(a_0,a)$ be a middle frame field along $F$ and let $z=x+iy$ be an
isothermic
chart, so that $\alpha^2_0 = e^udx$, $\alpha^3_0 = e^udy$, where
$\Phi =e^{u}$ is the Blaschke potential.
Since $p_1 + p_3 =0$ and $p_2 = 0$, by \eqref{se3} and \eqref{se4}, we have
\begin{equation}\label{dp1}
  dp_1 + 2p_1\alpha^1_1 =0.
  \end{equation}
Next, define
 \[
   v : = e^{2u}\left( -p_1 a_1 +a_4 \right).
    \]
By exterior differentiation of $v$ and use of \eqref{dp1},
it is easily verified that $dv = 0$, i.e., $v$ is a constant vector.
This, combined with the fact that $d\sigma_F = \alpha^2_0a_2 +\alpha^3_0a_3$, gives
\[
 d \langle \sigma_F, v \rangle =\langle d\sigma_F, v \rangle = 0,
\]
that is,
\[
  \langle \sigma_F -O, v \rangle =0,
   \]
for some point $O\in \R^4_1$, which implies that $\sigma_F$ actually lies in the
hyperplane of $\R^4_1$ defined by $O$ and the vector $v$.
Depending on whether $v$ is timelike, spacelike, or isotropic, $\sigma_F$ lies
in a spacelike, timelike, or (degenerate) isotropic hyperplane of $\R^4_1$.

Conversely, if $\langle \sigma_F -O, v \rangle =0$, for some point $O\in \R^4_1$ and
some constant vector $v$, then $\langle d\sigma_F, v \rangle =0$, which implies
$v = \ell_1 a_1 + \ell_4 a_4$, for some smooth functions $\ell_1, \ell_4$.
Exterior differentiation of $v = \text{const}$ and use of the structure equations yields
\begin{eqnarray*}
d\ell_1 +\ell_1 \alpha^1_1 =0,  && (\ell_1  + \ell_4  p_1)\alpha^2_0 + \ell_4 p_2 \alpha^3_0 =0,
 \\
 \ell_4 p_2 \alpha^2_0 - (\ell_1  + \ell_4  p_1)\alpha^3_0 = 0,  &&
     d\ell_4 -\ell_4 \alpha^1_1 = 0,
      \end{eqnarray*}
from which follows that $d\alpha^1_1 =0$. On the other hand,
by \eqref{conn-form} and \eqref{se2},
$$
  d\alpha^1_1 = 2 p_2 \alpha^2_0 \wedge \alpha^3_0,
   $$
which implies $p_2 = 0$, and hence $F$ is $L$-isothermic.

The last claim follows from the fact that
$\sigma_F$ lies in
a hyperplane, a totally geodesic submanifold,
and from the fact that, being $F$ $L$-minimal, $\sigma_F$ has
zero mean curvature vector, that is, $\mathbf H =0$
(cf. also \cite{AGM}, Remark 3).
\end{proof}

\begin{remark}
From the previous proof, it follows that
\[
  \langle v, v \rangle = 2p_1 e^{4u},
  \]
and that the equation \eqref{special-eq} satisfied by the Blaschke potential
$\Phi=e^{u}$ becomes
\begin{equation}\label{delta-u=c}
 \Delta u = - \langle v, v \rangle e^{-2u}.
\end{equation}
Thus, according to whether $p_1$ is negative, positive, or zero,
$\sigma_F$ lies in some spacelike, timelike, or isotropic hyperplane of $\R^4_1$.
\end{remark}

\begin{remark}
In the language of Section \ref{ss:cyclo}, Proposition \ref{prop:L-min-iso}
says that the $L$-spheres represented
by the $L$-Gauss map of an
$L$-minimal isothermic surface are restricted to lie in a
planar system of $L$-spheres.
The description of $L$-minimal isothermic surfaces goes back to
the work of Blaschke (cf. \cite{Blaschke1} (1925) and \cite{Blaschke}, $\S$ 81),
where it is proved that, up to $L$-equivalence, they either correspond
to minimal surfaces in Euclidean space, surfaces whose middle sphere
congruence is tangent to a fixed plane in Euclidean space, or surfaces
whose middle spheres have centers lying on a fixed plane. More recently,
it has been proved (cf. \cite{Song2013}) that $L$-minimal isothermic
surfaces are locally Laguerre equivalent to surfaces with vanishing
mean curvature in $\R^3$, $\R^3_1$, or a (degenerate) isotropic 3-space $\R^3_0$
of signature $(2,0)$.
See also \cite{Szer} for other results on
$L$-minimal isothermic surfaces.
\end{remark}

\subsection{The geometry of special $L$-isothermic surfaces}\label{ss:geo-sL-iso}

We now characterize special $L$-isothermic surfaces with
non-zero deformation parameter in terms
of their $L$-Gauss maps. This is given by the following result.

\begin{prop}\label{p:main}
Let $F : M \to \Lambda$ be a nondegenerate Legendre immersion.
The following two statements are equivalent:
\begin{enumerate}

\item $F$ has holomorphic $\Q_F$ and non-zero $\P_F$.

\item
$F$ is $L$-isothermic and its $L$-Gauss map $\sigma_F$ is restricted
to lie on the hypersurface of $\R^4_1$ defined by the equation
 \begin{equation}\label{sigma-in-sph-sys}
  \langle \sigma_F -O, \sigma_F -O \rangle = \mathrm{costant},
   \end{equation}
for some point $O$ of $\R^4_1$.

\end{enumerate}
\end{prop}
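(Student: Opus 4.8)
The plan is to prove both implications of Proposition \ref{p:main} by working in an isothermic chart and producing, in one direction, an explicit center $O$ for the pseudo-hypersphere, and in the other direction, extracting the holomorphicity of $\Q_F$ and the nonvanishing of $\P_F$ from the hypothesis that $\sigma_F$ lies in a sphere congruence of constant radius. Throughout I would use the middle frame field $A=(a_0,a)$ along $F$, the identity $d\sigma_F = \alpha^2_0 a_2 + \alpha^3_0 a_3$, and the fact that the normal bundle $N(\sigma_F)$ is spanned by the null pair $\{a_1,a_4\}$ with $\langle a_1,a_1\rangle=\langle a_4,a_4\rangle=0$, $\langle a_1,a_4\rangle=-1$.

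\textbf{From (1) to (2).} Assume $\Q_F$ is holomorphic and $\P_F$ is nowhere zero. By Proposition \ref{prop:hol-cond}(3), $F$ is $L$-isothermic, so choose an isothermic chart $z=x+iy$ with $\alpha^2_0=e^u dx$, $\alpha^3_0=e^u dy$ and Blaschke potential $\Phi=e^u$; by Proposition \ref{thm:hQ-iff-W-or-special}, $\Delta u = ce^{-2u}$, and by \eqref{special-L-J} the invariants are $\text{\sc w}=ke^{-2u}$, $\text{\sc j}=-\tfrac12 e^{-2u}\Delta u$ with $k\ne 0$ the deformation parameter (nonzero because $\P_F=2\text{\sc w}e^{2u}\varphi^2 = 2k\varphi^2\ne 0$). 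The idea is to seek $O = \sigma_F - (\mu a_1 + \nu a_4)$ for smooth functions $\mu,\nu$ to be determined so that $dO=0$; then $\langle\sigma_F-O,\sigma_F-O\rangle = -2\mu\nu$ is automatically constant along $M$ if $dO=0$ forces $\mu\nu$ constant (which it will). Differentiating $O$ using \eqref{conn-form} and $d\sigma_F=\alpha^2_0 a_2+\alpha^3_0 a_3$, the $a_2,a_3$-components give linear relations forcing $\mu,\nu$ proportional to $e^{2u}$ times constants built from $p_1,p_3$, and the $a_1,a_4$-components, using the parallel-mean-curvature equation \eqref{gLm-eq} (equivalently $d(p_1+p_3)+2(p_1+p_3)(q_2\alpha^2_0-q_1\alpha^3_0)=0$) together with the structure equations \eqref{se3}, \eqref{se4}, close up to give $dO=0$. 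This is essentially the same mechanism as in the proof of Proposition \ref{prop:L-min-iso}, but now with $p_1+p_3\ne 0$ the surviving combination produces a genuine pseudo-hypersphere rather than a hyperplane. The resulting constant $\langle\sigma_F-O,\sigma_F-O\rangle$ will be expressible in terms of $k$ and $c$, with sign governed by the sign of $p_1 p_3$ (or equivalently by the character data), which is precisely what is needed for the refined statement about $\mathbb H^3$, $\mathbb S^3_1$, or a lightcone in Theorem \ref{thm:B}.

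\textbf{From (2) to (1).} Suppose $F$ is $L$-isothermic (so $p_2=0$) and $\langle\sigma_F-O,\sigma_F-O\rangle$ is constant, say equal to $c_0$, for a fixed $O\in\R^4_1$. Write $\sigma_F - O = \xi^2 a_2 + \xi^3 a_3 + \eta_1 a_1 + \eta_4 a_4$ in the middle frame; differentiating $\langle\sigma_F-O,\sigma_F-O\rangle = c_0$ gives $\langle d\sigma_F,\sigma_F-O\rangle = 0$, which forces $\xi^2 = \xi^3 = 0$, i.e.\ $\sigma_F - O = \eta_1 a_1 + \eta_4 a_4$ lies in the normal plane at each point. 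Then $0 = d\sigma_F - d(\eta_1 a_1+\eta_4 a_4)$; expanding with \eqref{conn-form} and matching $a_2,a_3$-coefficients yields the algebraic relations $\eta_1 + \eta_4 p_1 = 1$ (and the analogous one with $p_3$, using $p_2=0$), determining $\eta_1,\eta_4$ in terms of $p_1,p_3$; matching $a_1,a_4$-coefficients gives two first-order equations for $\eta_1,\eta_4$. Combining these, after eliminating the $\eta$'s, produces exactly the parallel condition $d(p_1+p_3)+2(p_1+p_3)(q_2\alpha^2_0-q_1\alpha^3_0)=0$, hence by Proposition \ref{prop:hol-cond}(1) the quartic $\Q_F$ is holomorphic. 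Finally, $\P_F\ne 0$: if $\P_F$ vanished somewhere then $F$ would be $L$-minimal on an open set, forcing $\sigma_F$ into a hyperplane (Proposition \ref{prop:L-min-iso}) rather than a proper pseudo-hypersphere — one rules this out by comparing $\langle\sigma_F-O,\sigma_F-O\rangle$ with the hyperplane equation, or more directly by observing that $\P_F\equiv 0$ together with the constancy of the radius forces a contradiction in the relation $\eta_1+\eta_4 p_1=1$.

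\textbf{Main obstacle.} The routine part is the bookkeeping of frame components; the delicate point is the closing-up computation in the $(1)\Rightarrow(2)$ direction — verifying that the candidate center $O = \sigma_F - (\mu a_1+\nu a_4)$ is genuinely constant. This requires that the functions $\mu,\nu$ (forced by the tangential equations) simultaneously satisfy the two normal-direction equations, and this is exactly where the holomorphicity of $\Q_F$ (via \eqref{gLm-eq}) and the special-$L$-isothermic PDE $\Delta u = ce^{-2u}$ must both be used; getting the constants to match and identifying the sign of the resulting radius-squared in terms of $k$ and $c$ is the real content. I would organize that computation by first treating the $L$-minimal case $k=0$ as a consistency check against Proposition \ref{prop:L-min-iso}, then perturbing by the $me^{-2u}$ term in \text{\sc w}.
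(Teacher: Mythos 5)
Your proposal is correct and follows essentially the same route as the paper: both directions hinge on writing $\sigma_F-O=\ell_1 a_1+\ell_4 a_4$ in the middle frame, extracting the algebraic relations $\ell_1+\ell_4 p_1=1$, $-\ell_1+\ell_4 p_3=1$ (whose consistency is exactly $p_1+p_3\neq 0$, i.e.\ $\P_F\neq 0$), and identifying the differential relations $d\ell_1+\ell_1\alpha^1_1=0$, $d\ell_4-\ell_4\alpha^1_1=0$ with the parallel mean curvature condition, i.e.\ the holomorphicity of $\Q_F$, closed up via Proposition \ref{prop:hol-cond}(4). The only cosmetic difference is that you route part of the closing-up argument through the Blaschke potential PDE $\Delta u=ce^{-2u}$, whereas the paper uses the frame-invariant identity $Q=cP^2$ directly.
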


\begin{proof} Let us show that $(2)$ implies $(1)$.
Let $A = (a_0, a)$ be a middle frame field along $F$.
Differentiation of
\eqref{sigma-in-sph-sys} yields
\[
  \langle d\sigma_F , \sigma_F -O \rangle = 0,
\]
which implies
\begin{equation}\label{sigma-ort}
 \sigma_F -O  =\ell_1  a_1 + \ell_4  a_4,
  \end{equation}
for smooth functions $\ell_1$, $\ell_2$.
Differentiation of \eqref{sigma-ort} and use of the structure equations,
taking into account that $p_2 = 0$, yields
\[
\begin{split}
 \alpha^2_0\, a_2 +\alpha^3_0\, a_3 &=  d\ell_1 a_1  + d\ell_4 a_4 +\ell_1  da_1 +
\ell_4 da_4  \\
 &=  \left(d\ell_1 +\ell_1 \alpha^1_1\right)a_1 +
  \left(\ell_1 +\ell_4 p_1\right) \alpha^2_0\,a_2  \\
  & \quad + \left(-\ell_1 +\ell_4 p_3\right) \alpha^3_0\,a_3 +
  \left(d\ell_4 -\ell_4 \alpha^1_1\right)a_4,
   \end{split}
     \]
which amounts to
\begin{eqnarray}
   \ell_1  + \ell_4  p_1 =1, & &
  - \ell_1  + \ell_4  p_3 = 1, \label{l1l2a}\\
   d\ell_1 +\ell_1 \alpha^1_1 =0, & &
     d\ell_4 -\ell_4 \alpha^1_1 = 0. \label{l1l2b}
      \end{eqnarray}
The consistency condition of \eqref{l1l2a}, $p_1 + p_3 \neq 0$, implies
$\P_F$ non-zero. Solving \eqref{l1l2a} for $\ell_1$, $\ell_4$, we get
\[
  \ell_1 = \frac{p_3-p_1}{p_1+p_3}, \quad \ell_4 = \frac{2}{p_1 +p_3}.
  \]
Now, it is readily seen that equation $d\ell_4 -\ell_4 \alpha^1_1=0$
amounts the condition that $\Q_F$ be holomorphic.
Using this and Proposition \ref{prop:hol-cond} (4),
one checks that equation $d\ell_1 +\ell_1 \alpha^1_1 =0$
is identically satisfied.

Conversely,
if (1) holds, by Proposition \ref{prop:hol-cond} (3), $F$ is $L$-isothermic.
 Now, since $\P_F$ is non-zero, equations \eqref{l1l2a} are consistent and
\[
  \ell_1 = \frac{p_3-p_1}{p_1+p_3}, \quad \ell_4 = \frac{2}{p_1 +p_3}.
  \]
By Proposition \ref{prop:hol-cond} (4), we have
$(p_3-p_1) = c(p_1+p_3)^2$,
for a constant c, so that
\[
 d\ell_1 +\ell_1 \alpha^1_1 = c\left[d(p_1+p_3) + (p_1+p_3) \alpha^1_1 \right]= 0.
  \]
We also have
\[
 d\ell_4 -\ell_4 \alpha^1_1 = \frac{-2}{(p_1+p_3)^2}
\left[d(p_1+p_3) + (p_1+p_3) \alpha^1_1\right] = 0,
  \]
which implies that equations \eqref{l1l2b} are identically satisfied.
There exist then functions $\ell_1$, $\ell_4$ such that
\[
 d(\sigma_F -\ell_1 a_1 -\ell_4 a_4) = 0.
\]
Thus,
$$\sigma_F -O  =\ell_1  a_1 + \ell_4  a_4,$$
for some point $O\in \R^4_1$, which
is equivalent to the condition \eqref{sigma-in-sph-sys}, as required.
\end{proof}

\begin{remark}
Observe that, with the notation used above,
\begin{equation}\label{curv-hq}
\langle \sigma_F -O, \sigma_F -O \rangle = \mathrm{costant}
= \frac{4(p_1-p_3)}{(p_1+p_3)^2} = \frac{2\textsc j}{\textsc w^2}.
\end{equation}
\end{remark}
\begin{remark}
In the language of Section \ref{ss:cyclo}, the previous proposition
says that the $L$-spheres represented
by the $L$-Gauss map of a nondegenerate Legendre immersion $F$ with
holomorphic $\Q_F$ and non-zero $\P_F$
are restricted to lie in a
spherical system of $L$-spheres.
\end{remark}

For $r>0$ and some point $O\in \R^4_1$, we let
\[
 \mathbb S^3_1(O,r^2) =\left\{x \in \R^4_1 \, : \,
  \langle x-O, x-O\rangle = {1}/{r^2}\right\}
   \]
denote the {\it timelike pseudo-hypersphere}
{\it centered at} $O$, a translate of {de Sitter 3-space}
$\mathbb S^3_1(r^2)\subset \R^4_1$.
The Lorentzian metric on $\R^4_1$ restricts to a Lorentzian metric on
$\mathbb S^3_1(O,r^2)$ having constant sectional curvature $r^2$.

\vskip0.1cm
In the same way, for $r>0$ and some point $O\in \R^4_1$, we let
\[
 \mathbb H^3_0(O,-r^2) =\left\{x \in \R^4_1 \, : \,
  \langle x-O, x-O\rangle = -{1}/{r^2}\right\}
   \]
denote the {\it spacelike pseudo-hypersphere}
{\it centered at} $O$. The Lorentzian metric on $\R^4_1$ restricts to a Riemannian metric
on $\mathbb H^3_0(O,-r^2)$ of constant sectional curvature $-r^2$.
The hyperquadric
$\mathbb H^3_0(O,-r^2)$ consists of two components
congruent to each other under an
isometry of $\mathbb R^4_1$:
the component
$\mathbb H^3_+(O,-r^2)$
through $O + {^t\!\left(\frac{1}{\sqrt2 r},0,0, \frac{1}{\sqrt2 r}\right)}$,
and the component $\mathbb H^3_-(O,-r^2)$
through $O + {^t\!\left(\frac{-1}{\sqrt2 r},0,0, \frac{-1}{\sqrt2 r}\right)}$.
The components $\mathbb H^3_+(O,-r^2)$ and $\mathbb H^3_-(O,-r^2)$
are translates, respectively, of the {\it future} and {\it past embeddings}
of {hyperbolic 3-space} $\mathbb H^3(-r^2)$ in $\R^4_1$.
%
%
\vskip0.1cm
The {\it isotropic pseudo-hypersphere}
{\it centered at} $O$
is the affine lightcone at $O$ defined by
\[
 \mathcal L^3(O) =\left\{x \in \R^4_1 \, : \,
  \langle x-O, x-O\rangle = 0\right\}.
   \]
Equivalently, $\mathcal L^3(O)= O + \mathcal L^3$, where $\mathcal L^3=\left\{x \in \R^4_1 \, : \,
  \langle x, x\rangle = 0\right\}$.
The Lorentzian metric on $\R^4_1$ restricts to a degenerate metric
of signature $(2,0)$ on $\mathcal L^3(O)\setminus \{O\}$.
The two components $\mathcal L^3_+(O)$
and $\mathcal L^3_-(O)$ of the hypersurface
$\mathcal L^3(O)\setminus \{O\}$ are translates of the
time-oriended
lightcones $\mathcal L^3_+$ and $\mathcal L^3_-$ of  $\R^4_1$ (cf. \eqref{time-cone}).

\vskip0.3cm
We are now in a position to prove our second main result.

\vskip0.3cm

\noindent\textbf{Theorem B.}
{\em Let $F : M \to  \Lambda$ be a nondegenerate Legendre immersion.
Then:

\begin{enumerate}

\item
$F$ is $L$-minimal and $L$-isothermic
if and only if
its $L$-Gauss map $\sigma_F : M \to \R^4_1$
has zero mean curvature
in some
spacelike, timelike, or (degenerate) isotropic hyperplane of $\R^4_1$.
%

\item $F$
has holomorphic $\Q_F$ and non-zero $\P_F$
if and only if
its $L$-Gauss map $\sigma_F : M \to \R^4_1$
has constant mean curvature $H=r$
in some $\mathbb H^3_\pm(O,-r^2)\subset \R^4_1$,
$\mathbb S^3_1(O,r^2)\subset \R^4_1$,
or has zero mean curvature in some $\mathcal L^3_\pm(O)\subset \R^4_1$.

\end{enumerate}

\noindent In addition, if the $L$-Gauss map of $F$
takes values in a spacelike (respectively, timelike, isotropic) hyperplane, then the
$L$-Gauss maps of the $T$-transforms of $F$ take values
in a translate of a hyperbolic 3-space (respectively, de Sitter 3-space,
time-oriented lightcone).
}

\begin{proof}
(1) is a consequence of Proposition \ref{prop:L-min-iso}.
(2) From the preceding discussion and by Proposition \ref{p:main},
the $L$-Gauss map $\sigma_F$ takes values
in a component of some $\mathbb H^3_0(O,-r^2)$, in some $\mathbb S^3_1(O,r^2)$, or in a component
of some $\mathcal L^3(O)\setminus \{O\}$, that is,
$\sigma_F$
lies in some (translate of) $\mathbb H^3(-r^2)$,
$\mathbb S^3_1(r^2)$, or in some translate of a time-oriented lightcone of $\R^4_1$.
From this and the fact that $\sigma_F$ has isotropic mean curvature vector field,
i.e., $\langle\mathbf H, \mathbf H \rangle =0$, it follows that
the mean curvature of $\sigma_F$
is constant, of values $H =r$ or zero, as indicated
(cf. also \cite{AGM}, Remark 3).

The last claim is a consequence of \eqref{special-L-J-m}, \eqref{delta-u=c} and \eqref{curv-hq}.
\end{proof}

\section{Laguerre deformation and Lawson correspondence}\label{s:lawson}

In \cite{Lawson}, Lawson proved that there is an isometric correspondence between
certain constant mean curvature surfaces in space forms.
Let $\mathcal M^3(\kappa)$ denote the simply-connected, 3-dimensional
space form of constant curvature $\kappa$. Let $M$ be a simply-connected surface and
let $f_1 : M \to \mathcal M^3(\kappa_1)$ be an immersion of constant mean
curvature $H_1$,
with induced metric $I$ and shape operator $S_1$. Then, for each constant
$\kappa_2 \leq H_1^2 +\kappa_1$, the pair $I$, $S_2 := S_1 + (H_2-H_1) {Id}$
satisfies the Gauss and Codazzi equations for an immersion
$f_2 : M \to \mathcal M^3(\kappa_2)$ of constant mean curvature
$H_2 = \sqrt{H_1^2 +\kappa_1 - \kappa_2}$, which is
isometric to $f_1$.\footnote{Actually,
there exists a $2\pi$-periodic family of isometric immersions $f_{2,\theta}
: M \to \mathcal M^3(\kappa_2)$, the classical associated family.}
The isometric immersions $f_1$, $f_2$ are said to be related by the {\it Lawson
correspondence}. When $f_1$ is a minimal immersion, $f_2$
is also referred to as a constant mean curvature {\it cousin} of $f_1$. In particular,
minimal surfaces in $\mathbb R^3$ (respectively, $S^3$) correspond to constant
mean curvature one surfaces in $\mathbb H^3(-1)$ (respectively, $\mathbb R^3$).
For $\kappa_1 =\kappa_2$ and $H_1=H_2$ we get the family of
associated constant mean curvature $H_1$ surfaces. See \cite{Br1987, UY-Crelle} for
special cases of the Lawson correspondence.

In \cite{Pa1990}, Palmer proved that there exists a Lawson correspondence
between certain constant mean curvature spacelike surfaces in Lorentzian
space forms. In particular, there is a correspondence between
maximal ($H=0$) spacelike surfaces in Minkowski 3-space $\R^3_1$
and spacelike surfaces of constant mean curvature $\pm 1$ in de Sitter
3-space $\mathbb S^3_1(1)$. See \cite{AiAk, AGM, Lee2005} for the
discussion of special cases of this correspondence.

\begin{ex}[Deformation of special $L$-isotermic surfaces with $c > 0$]

Let $F : M \to \Lambda$ be a nondegenerate special $L$-isothermic
surface with Blaschke potential $\Phi = e^u$ satisfying the equation
\[
 \Delta u = c e^{-2u},
  \]
with character $c>0$, and deformation (spectral) parameter $k>0$. This implies
\[
   \text{\sc w} = k e^{-2u}, \quad \text{\sc j} = -\frac{1}{2}e^{-2u}\Delta u.
  \]
According to Proposition \ref{p:main} and \eqref{curv-hq}, the $L$-Gauss map $\sigma$ of $F$
has constant mean curvature $H = \frac{k}{\sqrt c}$ into
(a translate of) the hyperbolic 3-space of constant curvature
$\kappa = -\frac{k^2}{c}$.
For each $m\in \R_+$, consider the $T_m$-transform
$F_m$ of $F$. Again by Proposition \ref{p:main} and \eqref{curv-hq},
the $L$-Gauss map $\sigma_m$ of $F_m$ is restricted
to lie on the hyperquadric centered at $O$ given by
\[
  \langle \sigma_m - O, \sigma_m -O  \rangle =
\frac{2\textsc j_m}{\textsc w_m^2} = -\frac{c}{(m+k)^2},
   \]
for some $O\in \R^4_1$.
Thus, $\sigma_m$ has constant mean curvature $H_m = \frac{m+k}{\sqrt c}$
in (a translate of) the hyperbolic 3-space of curvature
$\kappa_m =-\frac{(m+k)^2}{c}$.
Note that
\[
 \kappa_m +H^2_m = \kappa + H^2 =0
  \]
does not depend on $m$.
We have then established that the $L$-Gauss maps of
the $T$-transforms of special $L$-isothermic surfaces with positive
character and positive deformation parameter
%
all have constant mean curvature in
(a translate of) some hyperbolic 3-space.
Moreover, since the metrics induced by $\sigma_m$ do not depend on
$m$, i.e., $g_{\sigma_m}=g_\sigma$, we may conclude that
the $T$-transformation of such special $L$-isothermic surfaces
can be viewed, via their $L$-Gauss maps,
as the Lawson correspondence between certain
constant mean curvature surfaces in different hyperbolic 3-spaces.

If $F = F_0$ is $L$-minimal isothermic, its $L$-Gauss map
$\sigma_0$ is minimal in (a translate of) Euclidean space $\R^3$.
In this case, for each
$m\in \R^\ast$, the
$L$-Gauss map
$\sigma_m$ has constant mean curvature $m/\sqrt c$
in hyperbolic 3-space $\mathbb H^3(-m^2/c)$.
The family $\{\sigma_m\}_{m\in \R^\ast}$
can be viewed as
the 1-parameter family of isometric immersions associated with the minimal
immersion $\sigma_0$
considered by Umehara--Yamada \cite{UY-Crelle}.
This provides a Laguerre geometric interpretation
of the Umehara--Yamada isometric perturbation
of minimal surfaces in Euclidean space into
constant mean curvature surfaces in hyperbolic 3-space.
A M\"obius geometric interpretation
 of the Umehara--Yamada isometric perturbation
was given in \cite{HMN}.

\end{ex}

\begin{ex}[Deformation of special $L$-isotermic surfaces with $c < 0$]

If $F : M \to \Lambda$ is a nondegenerate special $L$-isothermic
surface with Blaschke potential $\Phi = e^u$ satisfying the equation
\[
 \Delta u = c e^{-2u},
  \]
with character $c<0$, and deformation (spectral) parameter $k>0$, then
\[
   \textsc w = k e^{-2u}, \quad \textsc j = -\frac{1}{2}e^{-2u}\Delta u.
  \]
By Proposition \ref{p:main} and \eqref{curv-hq}, the $L$-Gauss map $\sigma$
has constant mean curvature $H = \frac{k}{\sqrt{-c}}$ into
(a translate of) the de Sitter 3-space of constant curvature
$\kappa = -\frac{k^2}{c}$.
For each $m\in \R_+$,
the $L$-Gauss map $\sigma_m$ of the $T_m$-transform $F_m$ is restricted
to lie on the hyperquadric centered at $O$ given by
\[
  \langle \sigma_m - O, \sigma_m -O  \rangle =
\frac{2\textsc j_m}{\textsc w_m^2} = -\frac{c}{(m+k)^2},
   \]
for some $O\in \R^4_1$.
Thus, $\sigma_m$ has constant mean curvature $H_m = \frac{m+k}{\sqrt {-c}}$
in (a translate of) the de Sitter 3-space of curvature
$\kappa_m =-\frac{(m+k)^2}{c}$.
Note that
\[
 \kappa_m +H^2_m = \kappa + H^2
  \]
does not depend on $m$.
%
We have then established that the $L$-Gauss maps of
the $T$-transforms of special $L$-isothermic surfaces with
negative character and positive deformation parameter
all have constant mean curvature in
(a translate of) some de Sitter 3-space.
As above, the metrics induced by $\sigma_m$ do not depend on
$m$, i.e., $g_{\sigma_m}=g_\sigma$. Thus,
the $T$-transformation of special $L$-isothermic surfaces
with negative character and positive deformation parameter
can be viewed, via their $L$-Gauss maps, as the Lawson correspondence between certain
constant mean curvature spacelike surfaces in different de Sitter 3-spaces.

If $F = F_0$ is $L$-minimal isothermic,
$\sigma_0$ is maximal ($H_0=0$) in (a translate of) Minkowski 3-space $\R^3_1$.
In this case, for each
$m\in \R^\ast$, the $L$-Gauss map $\sigma_m$ has constant mean curvature
$m/\sqrt {-c}$
in de Sitter 3-space $\mathbb S^3(-m^2/c)$.
This provides a Laguerre geometric interpretation
of the Lawson correspondence between
 maximal spacelike surfaces in Minkowski 3-space and
constant mean curvature spacelike surfaces in de Sitter 3-space
(cf. Remark \ref{r:AGM} below).

\end{ex}

\begin{ex}[Deformation of special $L$-isotermic surfaces with $c=0$]

Similar considerations hold for special $L$-isothermic surfaces with
character $c=0$. By considering the $L$-Gauss maps, the $T$-transforms of a
zero mean curvature
spacelike surface in (a translate of) a time-oriented lightcone $\mathcal L^3_\pm \subset \R^4_1$
all have zero mean curvature in (a translate of) $\mathcal L^3_\pm$.
In particular,
if $\sigma_0$ is a zero mean curvature
spacelike surface in a (degenerate) isotropic hyperplane, then,
for each $m\in \R^*$, the $L$-Gauss map $\sigma_m$
has zero mean curvature
in some translate of $\mathcal L^3_\pm$.
As a by-product, the $T$-transformation establishes
an isometric correspondence between zero mean curvature
spacelike surfaces in a (degenerate) isotropic 3-space and zero mean curvature spacelike surfaces
in a time-oriented lightcone of $\R^4_1$.
For a brief introduction to
isotropic geometry we refer to \cite{Pott2009, Pott2012}.

\end{ex}

\begin{remark}\label{r:AGM}

The $L$-Gauss maps of special $L$-isothermic surfaces are examples
of the so-called {surfaces of Bryant type} in $\R^4_1$ (cf. \cite{AGM}): a
spacelike immersion $\psi : M \to \R^4_1$ with isotropic mean
curvature vector $\mathbf H$, i.e.,
$\langle \mathbf H, \mathbf H \rangle =0$,\footnote{otherway said,
$\psi$ is a {\it marginally trapped surface} in $\R^4_1$.}
and flat normal bundle is called a {\it surface of Bryant type} in $\R^4_1$
if $M$ is locally isometric to some minimal surface in $\R^3$
or to some maximal surface in $\R^3_1$.
In the context of surfaces of Bryant type, \cite{AGM} describes
an isometric perturbation of constant mean curvature $H=r$ surfaces
in $\mathbb H^3(-r^2)$ (respectively, $\mathbb S^3_1(r^2)$) to minimal (respectively, maximal)
surfaces in $\R^3$ (respectively, $\R^3_1$), which generalizes that of Umehara-Yamada
\cite{UY-Crelle}. By the above discussion, these isometric deformations can
be viewed as special cases of the Laguerre deformation of $L$-isothermic
surfaces.

\end{remark}

\bibliographystyle{amsalpha}

\end{document}